\newtheorem{Th}{Theorem}[section]
\newtheorem{Prop}[Th]{Proposition}
\newtheorem{Lem}[Th]{Lemma}
\newtheorem{Rem}[Th]{Remark}
\newtheorem{Que}[Th]{Question}
\newenvironment{altproof}[1]
{\noindent
{\em Proof of {#1}}.}
{\nopagebreak\mbox{}\hfill $\Box$\par\addvspace{0.5cm}}
   \newcommand{\vp}{\varphi}
   \newcommand{\eps}{\varepsilon}
   \def\div{\mathop{\mathrm{div}\,}}
     \def\span{\mathrm{span}}
   \def\id{\mathrm{id}}
    \def\O{\mathrm{O}}
   \def\N{\mathbb{N}}
   \def\R{\mathbb{R}}
   \def\curl{\mathrm{curl}}
   \def\dim{\mathrm{dim}}
   \def\cl{\mathrm{cl\,}}
   \def\U{\mathcal{U}} 
   \def\V{\mathcal{V}}
   \def\J{\mathcal{J}}
   \def\W{\mathcal{W}}
   \def\C{\mathbb{C}}
\newcommand{\cB}{{\mathcal B}}
\newcommand{\cC}{{\mathcal C}}
\newcommand{\cD}{{\mathcal D}}
\newcommand{\cE}{{\mathcal E}}
\newcommand{\cH}{{\mathcal H}}
\newcommand{\cJ}{{\mathcal J}}
\newcommand{\cM}{{\mathcal M}}
\newcommand{\cN}{{\mathcal N}}
\newcommand{\cP}{{\mathcal P}}
\newcommand{\cT}{{\mathcal T}}
\newcommand{\cV}{{\mathcal V}}
\newcommand{\cW}{{\mathcal W}}
\newcommand{\fJ}{{\mathfrak J}}
\renewcommand{\dim}{{\rm dim}\,}
\newcommand{\al}{\alpha}
\newcommand{\be}{\beta}
\newcommand{\ga}{\gamma}
\newcommand{\de}{\delta}
\newcommand{\la}{\lambda}
\newcommand{\De}{\Delta}
\newcommand{\Ga}{\Gamma}
\newcommand{\Om}{\Omega}
\def\curlop{\nabla\times}
\newcommand{\weakto}{\rightharpoonup}
\newcommand{\pa}{\partial}
\def\id{\mathrm{id}}
\newcommand{\tX}{\widetilde{X}}
\newcommand{\tu}{\widetilde{u}}
\newcommand{\tv}{\widetilde{v}}
\newcommand{\tcV}{\widetilde{\cV}}
\newcommand{\wh}{\widehat}
\newcommand{\cTto}{\stackrel{\cT}{\longrightarrow}}
\numberwithin{equation}{section}
\begin{document}
\title{Ground and bound state solutions of semilinear time-harmonic Maxwell equations in a bounded domain}
\author{Thomas Bartsch \and Jaros\l aw Mederski\footnote{The study was supported by research fellowship within project "Enhancing Educational Potential of Nicolaus Copernicus University in the Disciplines of Mathematical and
Natural Sciences" (project no. POKL.04.01.01-00-081/10)}}
\date{}
\maketitle

\begin{abstract}
We find solutions $E:\Om\to\R^3$ of the problem
\[
\left\{
\begin{aligned}
&\curlop(\curlop E) + \la E = \pa_E F(x,E) &&\quad \text{in }\Om\\
&\nu\times E = 0 &&\quad \text{on }\pa\Om
\end{aligned}
\right.
\]
on a simply connected, smooth, bounded domain $\Om\subset\R^3$ with connected boundary and exterior normal $\nu:\pa\Om\to\R^3$. Here $\curlop$ denotes the curl operator in $\R^3$, 
the nonlinearity $F:\Om\times\R^3\to\R$ is superquadratic and subcritical in $E$. The model nonlinearity is of the form $F(x,E)=\Ga(x)|E|^p$ for $\Ga\in L^\infty(\Om)$ positive, some $2<p<6$. It need not be radial nor even in the $E$-variable. The problem comes from the time-harmonic Maxwell equations, the boundary conditions are those for $\Om$ surrounded by a perfect conductor.
\end{abstract}

{\bf MSC 2010:} Primary: 35Q60; Secondary: 35J20, 58E05, 78A25

{\bf Key words:} time-harmonic Maxwell equations, perfect conductor, ground state, variational methods, strongly indefinite functional, Nehari-Pankov manifold

\section{Introduction}\label{sec:intor}

The paper deals with the Maxwell equations
\begin{equation}\label{eq:Maxwell}
\left\{
\begin{aligned}
    &\curlop \cH = \cJ+ \pa_t \cD \quad\hbox{(Ampere's law)}\\
    &\div(\cD)=\rho\\
    &\pa_t \cB + \curlop \cE=0 \quad\hbox{(Faraday's law)}\\
    &\div(\cB)=0,
\end{aligned}
\right.
\end{equation}
in a domain $\Om\subset\R^3$. Here $\cE,\cB,\cD,\cH:\Om\times\R\to \C^3$
correspond to the electric field, magnetic field, electric displacement field and magnetic induction, respectively. $\cJ$ is the electric current intensity and $\rho$ the electric charge density. Let $\cP,\cM:\Om\times\R\to \C^3$ denote the polarization field and the magnetization field respectively, and let $\eps,\mu:\Om\to\R$ be the permittivity and the permeability of the material. Then we consider the constitutive relations
\begin{equation}\label{eq:relations}
\cD=\eps \cE+\cP \quad\text{and}\quad \cH=\frac1\mu \cB-\cM
\end{equation}
where $\cP$ depends nonlinearly on $\cE$. In the absence of charges, currents and magnetization, i.~e.\ $\cJ=\cM=0$, $\rho=0$, using \eqref{eq:relations}, and differentiating the first equation in \eqref{eq:Maxwell} with respect to $t$, we arrive at the equation
$$
\curlop\left(\frac1\mu\curlop \cE\right)+\eps\partial_t^2 \cE
 = -\partial_t^2 \cP.
$$
In the time-harmonic case the fields $\cE$ and $\cP$ are of the form
$\cE(x,t) = E(x)e^{i\omega t}$, $\cP(x,t) = P(x)e^{i\omega t}$,
which leads to the time-harmonic Maxwell equation
$$
\curlop\left(\frac1\mu\curlop E\right)-\omega^2\eps E = \omega^2 P.
$$
Since $P$ depends on $E$, and assuming $\eps,\mu>0$ to be constant, we finally obtain an equation of the form
\begin{equation}\label{eq:main}
\curlop(\curlop E) + \la E = f(x,E) \qquad\textnormal{in } \Om,
\end{equation}
where $\la = -\mu\omega^2\eps\leq 0$.
In a Kerr-like medium one has $\cP = \al(x)|\cE|^2\cE$, hence
\[
f(x,E)=\mu\omega^2\al(x) |E(x)|^2 E(x).
\]
We shall treat more general nonlinearities $f(x,E)=\pa_E F(x,E)$, having $F(x,E)=\frac1p\Ga(x)|E|^p$ with $\Ga\in L^\infty(\Om)$ positive and $2<p<6$ as model in mind. The goal of this paper is to solve \eqref{eq:main} together with the boundary condition
\begin{equation}\label{eq:bc}
\nu\times E = 0\qquad\text{on }\pa\Om
\end{equation}
where $\nu:\pa\Om\to\R^3$ is the exterior normal. This boundary condition holds when $\Om$ is surrounded by a perfect conductor.

There is very little work on semilinear equations involving the curl-curl operator $(\curlop)^2:E\mapsto\curlop(\curlop E)$. One difficulty from a mathematical point of view is that the curl-curl operator has an infinite-dimensional kernel, namely all gradient vector fields. Solutions of \eqref{eq:main} are critical points of the functional
\begin{equation}\label{eq:action}
\fJ(E) = \frac12\int_\Om|\curlop E|^2dx + \frac\la2\int_\Om |E|^2dx - \int_\Om F(x,E)\,dx
\end{equation}
defined on an appropriate subspace of $H_0(\curl;\Om)$; see Section~3 for the definition of the spaces we work with. Using the Helmholtz decomposition $E=v+\nabla w$ with $\div v=0$ we have
\begin{equation*}
J(v,w) := \fJ(v+\nabla w)
 = \frac12\int_\Om|\curlop v|^2dx + \frac\la2\int_\Om |v+\nabla w|^2dx
    - \int_\Om F(x,v+\nabla w)\,dx.
\end{equation*}
For the nonlinearities which we consider, the functional is unbounded from above and from below. Moreover, critical points have infinite Morse index.

Take for instance the model case $F(x,E)=\frac14|E|^4$. Then we have on the space of divergence-free vector fields $v\in H_0(\curl;\Om)$:
\begin{equation*}
\begin{aligned}
J(v,0)=\fJ(v)
 &= \frac12\int_\Om|\curlop v|^2dx +  \frac\la2\int_\Om |v|^2dx
    - \frac14\int_\Om |v|^4dx\\
 &= \frac12\int_\Om|\nabla v|^2dx +  \frac\la2\int_\Om |v|^2dx - \frac14\int_\Om |v|^4dx,
\end{aligned}
\end{equation*}
and on the space of gradient vector fields $\nabla w\in L^4(\Om,\R^3)$:
\begin{equation*}
J(0,w)=\fJ(\nabla w)
 = \frac\la2\int_\Om |\nabla w|^2dx - \frac14\int_\Om |\nabla w|^4dx.
\end{equation*}
Thus if $-\De+\la>0$ on $H^1_0(\Om)$, then $E=0$ is a local minimum of $\fJ$ on the space of divergence-free vector fields $v\in H_0(\curl;\Om)$, but is not a local minimum on the space of gradient vector fields $\nabla w\in L^4(\Om)$, not even if $\la>0$. In fact, since the $L^2$-norm is weaker than the $L^4$-norm $0$ is a degenerate critical point of $\fJ$ on the space of gradient vector fields, neither a local minimum nor a local maximum (except $\la\le0$), nor a saddle point. The mountain pass value is $0$, and the Palais-Smale sequences associated to the mountain pass value will converge to $0$ even when $\la$ is positive. If $\la\le0$ then $0$ is a local maximum for $\fJ$ on the space of gradient vector fields, and if even $-\De+\la\le0$ it is a, possibly degenerate, saddle point for $\fJ$ on the space of divergence-free vector fields.

In addition to these problems related to the geometry of $\fJ$ and $J$, we also have to deal with compactness issues although $\Om$ is assumed to be bounded. Namely in all treatments of strongly indefinite functionals $J:X\to \R$ it is required that $J$ is essentially of the form
\[
J(u) = \frac12\left(\|u^+\|_X^2-\|u^-\|_X^2\right) - I(u)
\]
with $I'$ (sequentially) weak-to-weak$^*$ continuous; see e.~g.\ \cite{BenciRabinowitz,BartschDing,DingBook}.
However, we have to deal with a functional of the form
\[
J(u) = \frac12\|u^+\|_X^2 - I(u)
\]
where $I':X\to X^*$ is not (sequentially) weak-to-weak$^*$ continuous. Therefore we do not know whether a weak limit of a bounded Palais-Smale sequence is a critical point.
Although the methods from \cite{BartschDing} allow to deal with an infinite-dimensional kernel, the weak-to-weak$^*$ continuity of $I'$ is essential.

We shall use two approaches to find critical points of $\fJ$ and $J$ which work under different hypotheses on $F$ and $\Om$. One approach uses a generalization to strongly indefinite functionals of the Nehari manifold technique due to Pankov \cite{Pankov}; see \cite[Chapter~4]{SzulkinWethHandbook} for a survey. For this approach we cannot just cite existing results due to the above mentioned problems.
The other one is based on the the Palais principle of symmetric criticality. Roughly speaking, if $\Omega$ and $F$ are cylindrically symmetric, then $\fJ$ may be restricted to a subspace, where we are able to apply standard critical point theory.

In \cite{BenFor} Benci and Fortunato introduce a model for a unified field theory for classical electrodynamics which is based on a semilinear perturbation of the Maxwell equations. In the magnetostatic case, in which the electric field vanishes and the magnetic field is independent of time, they are lead to an equation of the form
\begin{equation}\label{eq:benci-fortunato}
\curlop(\curlop A) = W'(|A|^2)A\qquad\text{in } \R^3
\end{equation}
for the gauge potential $A$ related to the magnetic field $H=\curlop A$. Here
$F(A)=\frac12 W(|A|^2)$ is superquadratic and subcritical, and satisfies various additional conditions which exclude the model $F(A)=\frac14|A|^4$. In \cite{BenForAzzAprile} Azzollini et al.\ use the symmetry of the domain and of the equation to find solutions of \eqref{eq:benci-fortunato} of the form
\begin{equation}\label{eq:sym1}
A(x)=\al(r,x_3)\begin{pmatrix}-x_2\\x_1\\0\end{pmatrix},\qquad r=\sqrt{x_1^2+x_2^2}.
\end{equation}
A field of this form is divergence-free, so that the functional has the form
\[
\fJ(A) = \frac12\int_{\R^3}|\nabla A|^2 - W(|A|^2)\,dx
\]
hence standard methods of nonlinear analysis apply. In \cite{DAprileSiciliano} D'Aprile and Siciliano find solutions of \eqref{eq:benci-fortunato} of the form
\begin{equation}\label{eq:sym2}
A(x)=\be(r,x_3)\begin{pmatrix}x_1\\x_2\\0\end{pmatrix}
      +\ga(r,x_3)\begin{pmatrix}0\\0\\1\end{pmatrix},\qquad r=\sqrt{x_1^2+x_2^2},
\end{equation}
again using symmetry arguments.
When the domain has a cylindrical symmetry as in \cite{BenForAzzAprile} we can modify the approach from \cite{BenForAzzAprile} and obtain symmetric solutions. We would like to emphasize that we can also deal with nonsymmetric domains and with functions $F(x,E)$ that depend on $x$ and are not radial in $E$.

Finally we would like to mention the papers \cite{Stuart91,Stuart04,StuartZhou96, StuartZhou01,StuartZhou03,StuartZhou05, StuartZhou10} by Stuart and Zhou, who studied transverse electric and transverse magnetic solutions to (\ref{eq:Maxwell}) in $\R^3$.

The paper is organized as follows. In the next section we formulate our hypotheses on $\Om$ and $F$, and we state our main results concerning the existence of a ground state and of bound states. In Section~\ref{sec:setting} we introduce the variational setting, in particular the spaces on which $\fJ$ and $J$ will be defined. Next, in Section~\ref{sec:Nehari} we present some critical point theory for a class of functionals like $J$, and we introduce the Nehari-Pankov manifold on which we minimize $J$ to find a ground state. Finally, in Sections~\ref{sec:proof-main}-\ref{sec:proof-sym} we prove our results.

\section{Statement of results}\label{sec:results}

Let $\Om\subset\R^3$ be a simply connected, bounded domain with connected $C^{1,1}$-boundary, or let $\Om$ be a bounded and convex domain of $\R^3$. We want to find weak solutions $E:\Om\to\R^3$ of the boundary value problem
\begin{equation}\label{eq:problem}
\curlop(\curlop E) + \la E = f(x,E)\quad \textnormal{ in } \Om,\qquad
\nu\times E=0 \quad \textnormal{ on } \pa\Om.
\end{equation}
The boundary condition has to be understood in a weak sense for $\Om$ being convex with non-smooth boundary.

We collect assumptions on the nonlinearity $F(x,u)$.
\begin{itemize}
\item[(F1)] $F:\Om\times\R^3\to\R$ is differentiable with respect to $u\in\R^3$, and $f=\pa_uF:\Om\times\R^3\to\R^3$ is a Carath\'eodory function (i.~e.\ measurable in $x\in\Om$, continuous in $u\in\R^3$ for a.~e.\ $x\in\Om$).
\item[(F2)] $|f(x,u)|=o(|u|)$ as $u\to0$ uniformly in $x\in\Om$.
\item[(F3)] There exist constants $c>0$, $2<p<6$, such that
    \[|f(x,u)|\le c(1+|u|^{p-1})\qquad\text{for all } x\in\Om, u\in\R^3.\]
\item[(F4)] There exists a constant $d>0$, such that for all $x\in\Om$ and all $u\in\R^3\setminus\{0\}$
    \[\frac12\langle f(x,u),u\rangle > F(x,u)\geq d|u|^p.\]
\item[(F5)] $F$ is convex with respect to $u\in\R^3$.
\item[(F6)] $F$ is strictly convex with respect to $u\in\R^3$ if $-\la$ is an eigenvalue of the curl-curl operator in the space of divergence-free vector fields satisfying the boundary condition \eqref{eq:bc}, i.~e.\ if the eigenvalue problem
\begin{equation}\label{EgEigenvalue}
\left\{
\begin{array}{ll}
    \curlop(\curlop u) + \la u = 0,\; \div(u)=0
    &
    \hbox{in } \Om,\\
    \nu\times u =0
    &
    \hbox{on } \partial \Om.
\end{array}
\right.
\end{equation}
has a solution $u\ne0$. If $\la = 0$ then $F$ is uniformly strictly convex with respect to $u\in\R^3$, i.e.\ for any compact $A\subset(\R^3\times\R^3)\setminus\{(u,u):\;u\in\R^3\}$
$$
\inf_{\genfrac{}{}{0pt}{}{x\in\R^3}{(u_1,u_2)\in A}}
 \left(\frac12\big(F(x,u_1)+F(x,u_2)\big)-F\left(x,\frac{u_1+u_2}{2}\right)\right) > 0.
$$
\item[(F7)] If $ \langle f(x,u),v\rangle = \langle f(x,v),u\rangle \ne 0\ $ then
$\ \displaystyle F(x,u) - F(x,v)
 \le \frac{\langle f(x,u),u\rangle^2-\langle f(x,u),v\rangle^2}{2\langle f(x,u),u\rangle}$.\\
If in addition $F(x,u)\ne F(x,v)$ then the strict inequality holds.
\end{itemize}

Conditions (F1)-(F3) are rather harmless, and condition (F4) is reminiscent of the Ambro\-setti-Rabinowitz condition. The convexity conditions (F5) and (F6) are needed both for semi-continuity and linking. The technical condition (F7) will be needed to set up the  Nehari-Pankov manifold.

\begin{Rem}
a) If $F$ is of the form $F(x,u)=\Ga(x)|Mu|^p$ with $\Ga\in L^\infty(\Om)$ positive and bounded away from $0$, $M\in GL(3)$ an invertible $3\times 3$ matrix, and $2<p<6$, then all assumptions on $F$ are satisfied. Also sums of such functions are allowed. Observe that these functions are not radial when $M$ is not an orthogonal matrix.

b) Suppose $F$ is a radial map with respect to $u$, i.e. $F(x,u)=W(x,|u|^2)$, and
$W=W(x,t):\Om\times [0,\infty)\to\R$ is differentiable with respect to $t$ with $\pa_tW:\Om\times\R\to\R$ a Carath\'eodory function, and suppose in addition that $\pa_tW(x,t)$ is strictly increasing in $t\in(0,\infty)$, $W(x,0)=0$, for any $x\in\R^3$. Then it is not difficult to check that (F1), (F5) and (F7) hold.
Moreover $\frac{1}{2}\langle f(x,u), u\rangle > F(x,u)$ for $x\in\Om$ and $u\in\R^3\setminus\{0\}$. Indeed, note that for $u\in\R^3$
\begin{eqnarray*}
F(x,u)
 &=& F(x,u)-F(x,0)
  = \int_0^{1}\frac{d}{ds}F(x,su)\, ds
  = \int_0^{1}\pa_t W(x,|su|^2)2s|u|^2\ ds\\
 &\le& \pa_tW(x,|u|^2)|u|^2\int_0^{1}2s\ ds=\frac12\langle\pa_u F(x,u),u\rangle.
\end{eqnarray*}
and the last inequality is strict if $u\neq 0$.
\end{Rem}

\begin{Th}\label{thm:main}
Suppose the assumptions (F1)-(F7) hold. If $\la\le0$ then \eqref{eq:problem} has a ground state solution $E=v+\nabla w$ with $\div v=0$ and $v\ne0$. If in addition $F$ is even in $u$ then \eqref{eq:problem} has infinitely many solutions.
\end{Th}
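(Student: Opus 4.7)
The plan is to work in the Helmholtz-decomposed setting, set up a Nehari-Pankov manifold for the functional $J(v,w)=\fJ(v+\nabla w)$, minimize over it to get a ground state, and use an equivariant variant for the multiplicity part.

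\textbf{Setup and geometry.} First I would fix the function space $X = \cV \oplus \nabla \cW$, where $\cV$ is the space of divergence-free fields in $H_0(\curl;\Om)$ (which, under the hypotheses on $\Om$, embeds continuously into $H^1$ and compactly into $L^p(\Om,\R^3)$ for $2<p<6$) and $\nabla \cW$ is the appropriate space of gradient fields on which $\fJ$ is finite. The quadratic form $Q(v)=\int_\Om|\curlop v|^2 + \la|v|^2$ on $\cV$ has discrete spectrum, so for $\la\le0$ we obtain an orthogonal splitting $\cV = \cV^+\oplus \cV^-$ with $\dim\cV^-<\infty$ (possibly zero). Write $X^+=\cV^+$ and $X^-=\cV^-\oplus\nabla\cW$. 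The functional reads $J(u)=\frac12\|u^+\|^2 - \frac12\|u_0^-\|^2 - \Psi(u)$, where $u=u^++u_0^-+\nabla w$, and $\Psi(u)=\int_\Om F(x,u)\,dx$ is convex by (F5). Define the Nehari-Pankov manifold
\[
\cN = \bigl\{u\in X : u^+\ne0,\ J'(u)u=0 \text{ and } J'(u)\tilde u=0 \text{ for all } \tilde u\in X^-\bigr\}.
\]

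\textbf{Fiber maxima and positivity of the level.} Using (F5) and (F7), I would show that for every $u\in X$ with $u^+\ne0$ the function $(t,\tilde u)\mapsto J(tu^+ + \tilde u)$ on $(0,\infty)\times X^-$ attains a unique strict global maximum; this point lies on $\cN$. The map $u^+\mapsto $ this point gives a homeomorphism between $\{u^+\in X^+ : \|u^+\|=1\}$ and $\cN$, as in the framework of Szulkin-Weth / Pankov, but adapted to an indefinite $X^-$ of the form $\cV^-\oplus\nabla\cW$ that can be infinite-dimensional. Assumptions (F2), (F3), (F4) then yield $c_0:=\inf_\cN J>0$ by a standard argument bounding $J(tu^+ + \tilde u)$ from below for small $\|u^+\|$.

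\textbf{Minimizing sequence and compactness.} Next I would apply Ekeland's variational principle on $\cN$ (or the general critical point theory in Section~\ref{sec:Nehari}) to obtain a Palais-Smale sequence $(u_n)\subset\cN$ with $J(u_n)\to c_0$ and $J'(u_n)\to 0$ in $X^*$. The Nehari-Pankov identity together with the coercivity provided by (F4) (namely $\tfrac12\langle f(x,u),u\rangle - F(x,u)\ge 0$ and $F\ge d|u|^p$) gives boundedness of $(u_n)$. Pass to a weak limit $u_n\weakto u=v+\nabla w$: the divergence-free part $v_n\weakto v$ converges strongly in $L^p$ by compact embedding of $\cV$ into $L^p$. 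This is where the main obstacle lives, because $\nabla w_n$ enjoys no such compactness in $X$. The key point is to exploit the convexity of $F$: the equation $J'(u_n)\to 0$ tested against gradient fields reads $\int_\Om\la \nabla w_n\cdot\nabla\vp = \int_\Om \lan f(x,u_n),\nabla\vp\ran + o(1)$ for every $\nabla\vp\in\nabla\cW$, so the sequence of monotone operators $\nabla w\mapsto \nabla_u F(x,v_n+\nabla w)$ together with the Brezis-type lemma on monotone operators (using the strict/uniform convexity in (F6)) will force $\nabla w_n\to \nabla w$ strongly, at least up to a subsequence. Hence $\Psi'(u_n)\to\Psi'(u)$ and $u$ is a critical point; because $J(u)\ge c_0>0$ we have $u^+\ne0$, so in particular $v\ne0$, yielding the ground state.

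\textbf{Multiplicity under the $\Z/2$ symmetry.} When $F$ is even in $u$, the functional $J$ is $\Z/2$-equivariant and $\cN$ is $\Z/2$-invariant. The homeomorphism of the previous step transports the antipodal action on the unit sphere of $X^+$ to $\cN$, and $X^+$ is infinite-dimensional. Applying a Ljusternik-Schnirelman / Krasnosel'skii genus argument on $\cN$ (as in the Szulkin-Weth framework, or the equivariant critical point theory summarized in Section~\ref{sec:Nehari}) produces an unbounded sequence of critical levels $c_0\le c_1\le c_2\le\dots\to\infty$, each providing a distinct critical orbit thanks to the compactness step above. This yields infinitely many solutions of \eqref{eq:problem}.
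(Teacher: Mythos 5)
Your overall architecture coincides with the paper's: Helmholtz decomposition $X=\cV\oplus\nabla\cW$ with $X^+=\cV^+$, $\tX=\tcV\oplus\nabla\cW$; uniqueness of the fiber maximum on $\wh X(u)$ via (F5), (F7); the Nehari--Pankov manifold $\cN$; minimization of $J|_\cN$ with Ekeland or the abstract theory of Section~4; and $\Z/2$-genus theory on $\cN\cong S^+$ for the multiplicity. That much is fine and essentially identical to the paper.

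The step where you diverge, and where I see a real gap, is the compactness argument. You claim that testing $J'(u_n)\to 0$ against gradient fields and invoking monotone-operator (Brezis/Minty-type) machinery for $\nabla w\mapsto \pa_u F(x,v_n+\nabla w)$ ``forces $\nabla w_n\to\nabla w$ strongly.'' Monotonicity of $\pa_u F$ only gives identification of the weak limit in $L^{p'}$; it does not by itself produce strong $L^p$ convergence of $\nabla w_n$. The hypothesis (F6) asserts pointwise uniform \emph{strict} convexity on compacta, which is weaker than the functional-level modulus of convexity your argument would need on $L^p(\Om,\R^3)$; and for $\la<0$ versus $\la=0$ the mechanisms are actually different (when $\la<0$ one gets $L^2$-norm convergence of $v^+_n+\nabla w_n$ for free from weak lower semicontinuity, while for $\la=0$ one genuinely needs (F6)). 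The paper gets strong convergence not from the equation at all but from the \emph{energy} hypothesis $I(u_n)\to I(u)$ built into condition (A3) of the abstract framework: from this and convexity one extracts $v_n+\nabla w_n\to v_0+\nabla w_0$ a.e., then a Vitali-convergence computation gives $\int_\Om F(x,v_n+\nabla w_n-(v_0+\nabla w_0))\,dx\to 0$, and finally the lower bound $F(x,u)\ge d|u|^p$ in (F4) upgrades this to $|v_n+\nabla w_n-(v_0+\nabla w_0)|_p\to 0$. Your proposal never invokes $I(u_n)\to I(u)$, never uses the crucial growth lower bound from (F4) to convert an integral estimate into norm convergence, and never distinguishes $\la<0$ from $\la=0$; as written it is an assertion rather than a proof. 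You should either verify (A3) along the paper's lines, or supply a genuinely quantitative monotonicity/coercivity estimate in $L^p$, which (F6) alone does not provide.

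A secondary, smaller issue: you dispatch boundedness of Palais--Smale sequences with a one-line appeal to ``the Nehari--Pankov identity together with the coercivity provided by (F4).'' Since (F8) (Ambrosetti--Rabinowitz) is not assumed here, boundedness of PS sequences is not automatic; the paper therefore restricts to PS sequences \emph{on} $\cN$ and runs a contradiction argument in the normalized variables $\bar v_n = v_n/\|(v_n,w_n)\|$, showing first that $\|\bar v_n\|_\cV$ is bounded away from $0$, then $\bar v_0\neq 0$ via the fiber maximality, then a Fatou argument against $J(u_n)/\|u_n\|^2\to 0$. That concentration-compactness-flavored step belongs in the proof and should not be elided.
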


A ground state solution is a minimizer of the associated energy functional on the Nehari-Pankov manifold which will be defined in Section~4.

In a symmetric setting as in \cite{BenForAzzAprile} more can be said about the shape of the solutions. More precisely, we set $G=\O(2)\times\{1\}\subset O(3)$ and require:
\begin{itemize}
\item[(S)] $\Om$ is invariant with respect to $G$. $F$ is invariant with respect to the action of $G$ on the $x$-variable, and $F$ is radially symmetric with respect to $u$: $F(x,u)=F(gx,|u|)$ for all $x\in\Om$, $u\in\R^3$, $g\in G$.
\end{itemize}

Condition (S) simplifies the problem considerably. It allows to find solutions having a cylindrical symmetry as in \cite{BenForAzzAprile}. Here we assume the Ambrosetti-Rabinowitz-type condition
\begin{itemize}
\item[(F8)] There exists a constant $\theta>2$ such that
    \[
    \frac1\theta\langle f(x,u),u\rangle \ge F(x,u) > 0\qquad
    \text{for all $x\in\Om$ and all $u\in\R^3\setminus\{0\}$.}
    \]
\end{itemize}

\begin{Th}\label{thm:sym1}
Suppose (S) and (F1)-(F3), (F8) hold. Then \eqref{eq:problem} has infinitely many solutions of the form
\begin{equation}\label{eq:sym-sol}
E(x)=\al(r,x_3)\begin{pmatrix}-x_2\\x_1\\0\end{pmatrix},\qquad r=\sqrt{x_1^2+x_2^2}.
\end{equation}
\end{Th}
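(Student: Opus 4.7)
The plan is to apply Palais's principle of symmetric criticality to reduce \eqref{eq:problem} to a standard variational problem on a subspace consisting exactly of the fields described in \eqref{eq:sym-sol}. First I would define an isometric action of $G=\O(2)\times\{1\}$ on $H_0(\curl;\Om)$ by $(R\cdot E)(x)=RE(R^{-1}x)$ for rotations $R$ about the $x_3$-axis, and, for the reflection $\tau(x_1,x_2,x_3)=(x_1,-x_2,x_3)$, by the \emph{twisted} formula $(\tau\cdot E)(x)=-\tau E(\tau^{-1}x)$. A direct computation shows the fixed-point subspace $X_T$ consists precisely of the vector fields displayed in \eqref{eq:sym-sol}. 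By (S) together with the evenness of $F$ in $u$ (a consequence of $F(x,u)=F(x,|u|)$), the functional $\fJ$ is $G$-invariant under this action, so Palais's principle reduces the task to finding infinitely many critical points of the restriction $\fJ|_{X_T}$.

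Next I would analyse the structure of $\fJ|_{X_T}$. Every $E\in X_T$ is divergence-free and satisfies $\nu\times E=0$ on $\pa\Om$, so by the regularity theory for the admissible classes of domains (simply connected $C^{1,1}$ or bounded convex) the $\curl$-norm is equivalent to the $H^1$-norm on $X_T$; in particular $X_T\hookrightarrow L^q(\Om,\R^3)$ compactly for $2\le q<6$. On $X_T$ the functional reads
\[
\fJ(E)=\frac12\int_\Om|\curl E|^2\,dx + \frac{\la}{2}\int_\Om|E|^2\,dx - \int_\Om F(x,E)\,dx,
\]
whose quadratic part has a discrete spectrum bounded below with only finitely many negative eigenvalues, while the nonlinear part $E\mapsto\int_\Om F(x,E)\,dx$ has a compact derivative on $X_T$.

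With (F1)--(F3) and the Ambrosetti--Rabinowitz condition (F8), $\fJ|_{X_T}$ has the standard linking/mountain-pass geometry relative to the splitting $X_T=X^+\oplus X^-$ with $\dim X^-<\infty$, and it satisfies the Palais--Smale condition (boundedness of PS sequences from (F8) by the Ambrosetti--Rabinowitz trick, strong subsequential convergence from the compact embedding). Because $F(x,\cdot)$ is even, $\fJ|_{X_T}$ is $\Z/2$-invariant under $E\mapsto-E$, so the symmetric mountain-pass theorem (or, if $\dim X^->0$, the fountain theorem of Bartsch) yields an unbounded sequence of critical values $c_k\to\infty$. By symmetric criticality these critical points are weak solutions of \eqref{eq:problem}, and they have the prescribed form \eqref{eq:sym-sol} by construction.

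The main technical obstacle is not the variational side, which becomes routine once the restriction to $X_T$ is set up; rather, it lies in the two structural facts underlying that restriction. First, the twisted sign in the $\tau$-action is essential: without it the fixed-point set would be the larger space of all $\O(2)$-equivariant divergence-free fields, and one must check carefully that $\fJ$ remains invariant under the twist (this is where radial dependence of $F$ on $|u|$ is used, together with the $x$-invariance in (S)). Second, the equivalence between the $\curl$-norm and the $H^1$-norm on divergence-free fields with tangential boundary condition -- which delivers the compact Sobolev embedding and hence the PS property -- depends on the topological/geometric hypotheses on $\Om$ rather than on the nonlinear structure, and must be invoked from the variational setting of Section~\ref{sec:setting}.
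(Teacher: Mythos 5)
Your proposal is correct and follows essentially the same route as the paper: symmetric criticality reduces \eqref{eq:problem} to the space $Y$ of azimuthal fields of the form \eqref{eq:sym-sol}, on which the curl-norm controls the full $H^1$-norm (hence compact Sobolev embedding), and then the fountain theorem of \cite{Bartsch:1993} together with (F8) produces the unbounded sequence of critical values. The only deviation is cosmetic: you perform the reduction in a single step via a twisted $\O(2)$-action in which reflections act with an extra sign, whereas the paper first restricts to $SO(2)$-equivariant fields and then divides out by the involution $T=(S,-1)$ -- which is precisely your twisted reflection in disguise.
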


Observe that we do not require $\la\le0$, in fact here we could even replace $\la$ by a potential $V\in L^\infty(\Om)$.

\begin{Rem}
In the proof of Theorem \ref{thm:sym1} we use (S) to reduce the problem to that of finding critical points of $\fJ$ constrained to a Hilbert space $Y$ of fields $E$ of the form \eqref{eq:sym-sol}. For $E\in Y$ the functional has the form
\[
\fJ(E) = \frac12\|E\|^2 + \frac\la2\int_{\Om} |E|^2\,dx - \int_{\Om}F(x,E)\,dx.
\]
It is then standard to obtain critical points of $\fJ|_Y$ under various hypotheses on $\la$ and $F$. We chose the Ambrosetti-Rabinowitz condition but the interested reader may play with other types of nonlinearities.
\end{Rem}

\section{The variational setting}\label{sec:setting}

The natural space for the Maxwell eigenvalue problem \eqref{EgEigenvalue} is the space
$$
H(\curl;\Om) := \{E\in L^2(\Om,\R^3): \curlop E \in L^2(\Om,\R^3)\}
$$
where the curl of $E$, $\curlop E$, has to be understood in the distributional sense. This is a Hilbert space when provided with the graph norm
$$
\|E\|_{H(\curl;\Om)} := \left(|E|^2_2+|\curlop E|^2_2\right)^{1/2}.
$$
Here and in the sequel $|\cdot|_q$ denotes the $L^q$-norm. The closure of $C^{\infty}_0(\Om,\R^3)$ in $H(\curl;\Om)$ is denoted by $H_0(\curl;\Om)$. There is a continuous tangential trace operator $\gamma_t:H(\curl;\Om) \to H^{-1/2}(\pa\Om)$ such that
$$
\gamma_t(E)=\nu\times E_{|\pa\Om}\qquad\text{for any $E\in C^{\infty}(\overline\Om,\R^3)$}
$$
and (see \cite[Theorem~3.33]{Monk})
$$
H_0(\curl;\Om)=\{E\in H(\curl;\Om): \gamma_t(E)=0\}.
$$

The subspace of divergence-free vector fields is defined by
\[
\begin{aligned}
\cV
 &= \left\{E\in H_0(\curl;\Om): \int_\Om\langle E,\nabla w\rangle\;dx=0
        \text{ for any }w\in H^1_0(\Om)\right\}\\
 &= \{E\in H_0(\curl;\Om): \div E=0\}
\end{aligned}
\]
where $\div E$ has to be understood in the distributional sense. Then $\cV$ and the subspace of gradient vector fields $\nabla H^1_0(\Om):=\{\nabla w: w\in H^1_0(\Om)\}$ are orthogonal in $L^2(\Om,\R^3)$ and in $H_0(\curl,\Om)$; moreover
$$
H_0(\curl,\Om)=\cV\oplus \nabla H^1_0(\Om).
$$
Since $\Om$ has a $C^{1,1}$ boundary, or is a convex domain, there exists a continuous embedding
$$
\cV \subset \{E\in H_0(\curl;\Om): \div E\in L^2(\Om)\} \hookrightarrow H^1(\Om,\R^3);
$$
see \cite[Theorem~2.12 and Theorem~2.17]{Amrouche} or \cite[Theorem~1.2.1]{Doerfler}.
Therefore in view of Rellich's theorem $\cV$ is compactly embedded in $L^q(\Om,\R^3)$ for $1\le q<6$. Due to the embedding $\cV\hookrightarrow L^2(\Om;\R^3)$ the norm
$$
\|v\|_{\cV} := |\curlop v|_2,\quad v\in\cV,
$$
is equivalent to $\|\cdot\|_{H(\curl;\Om)}$ on $\cV$. Moreover, $\cV$ is a Hilbert space with the scalar product
$$
\langle u,v \rangle := \int_{\Om}\langle \curlop u,\curlop v\rangle\; dx.
$$

The spectrum of the curl-curl operator in $H_0(\curl;\Om)$ consists of the eigenvalue $0$ with infinite multiplicity and eigenspace $\nabla H^1_0(\Om)$, and of a sequence of eigenvalues $0<\la_1\le \la_2\le\dots\le\la_k\to\infty$ with finite multiplicities and eigenfunctions in $\cV$; see \cite[Theorem~4.18]{Monk}. In fact, for any $f\in L^2(\Om,\R^3)$ the equation
\begin{equation}\label{eq:operator}
\curlop(\curlop v) + v = f
\end{equation}
has a unique solution $v\in\cV$ and the operator
\[
K:L^2(\Om,\R^3) \to \cV\subset L^2(\Om,\R^3),\quad
 Kf=v\text{ solves \eqref{eq:operator},}
\]
is compact and self-adjoint.

\begin{Rem}\label{rem:spectrum}
Given $V\in L^\infty(\Om)$ the operator $(\curlop)^2 + V(x)$ in $H_0(\curl;\Om)$ is not a compact perturbation of $(\curlop)^2$. The treatment of nonconstant potentials requires a somewhat different variational setting. This is work in progress.
\end{Rem}

We also need the space
$$
W^p(\curl;\Om) := \{E\in L^p(\Om,\R^3)|\; \curlop E\in L^2(\Om,\R^3)\}
$$
with the norm
$$
\|E\|_{W^p(\curl;\Om)} := \left(|E|^2_p+|\curlop E|^2_2\right)^{1/2}
$$
and let $W^p_0(\curl;\Om)$ be the closure of $C^{\infty}_0(\Om)$ in $W^p(\curl;\Om)$. Note that $W^p(\curl;\Om)$ and $W_0^p(\curl;\Om)$ are Banach spaces, $\cV$ is a closed subspace of $W_0^p(\curl;\Om)$, and
$$
W_0^p(\curl;\Om) \subset H_0(\curl;\Om) = W_0^2(\curl;\Om).
$$
Setting
$\cW := W^{1,p}_0(\Om)$ we see that
\[
\nabla\cW := \{\nabla w: w\in\cW\} \subset W_0^p(\curl;\Om)
\]
is a closed subspace of $W_0^p(\curl;\Om)$ and
$$
W_0^p(\curl;\Om) = \cV\oplus\nabla\cW.
$$
In $\cW$ we introduce the norm
$$
\|w\|_{\cW} := |\nabla w|_p
$$
and observe that $\|w\|_{\cW} = \|\nabla w\|_{W^p(\curl,\Om)}$ because $\curlop\nabla w = 0$. Clearly $\nabla \cW$ is a closed subspace of $L^p(\Om,\R^3)$, and $\nabla \cW$, $\cW$, $W_0^p(\curl;\Om)$ are separable and reflexive Banach spaces. We use the following norm in $\cV\times\cW$:
$$
\|(v,w)\| := \left(\|v\|_{\cV}^2+\|w\|^2_{\cW}\right)^{1/2}
 = \left(|\curlop v|_2^2+|\nabla w|^2_p\right)^{1/2}
\qquad\text{for }(v,w)\in\cV\times\cW
$$

By our assumptions on $F$ the functional $\fJ: W_0^p(\curl;\Om) \to \R$,
\[
\fJ(E) := \frac12\int_\Om|\curlop E|^2dx + \frac\la2\int_\Om |E|^2dx - \int_\Om F(x,E)\,dx
\]
and the functional $J:\cV\times\cW\to\R$, defined by
\[
\begin{aligned}
J(v,w)
 &:= \fJ(v+\nabla w)
  = \frac12\int_{\Om}|\curlop v|^2\;dx + \frac\la2\int_\Om |v+\nabla w|^2\;dx
    - \int_{\Om}F(x,v+\nabla w)\;dx\\
 &= \frac12\|v\|_\cV^2 + \frac\la2\int_\Om (|v|^2+|\nabla w|^2)\;dx
    - \int_{\Om}F(x,v+\nabla w)\;dx,
\end{aligned}
\]
are well defined and of class $\cC^1$ with
\[
\begin{aligned}
&\fJ'(v+\nabla w)(\phi+\nabla\psi)
  = J'(v,w)(\phi,\psi)\\
 &\hspace{1cm}
  = \int_\Om\langle \curlop v,\curlop \phi\rangle\; dx
   + \la\int_\Om(\langle v,\phi\rangle+\langle\nabla w,\nabla\psi\rangle)\; dx
   - \int_\Om\langle f(x,v+\nabla w),\phi+\nabla\psi\rangle \; dx
\end{aligned}
\]
for any $(v,w),(\phi,\psi)\in \cV\times\cW$.

Our preceding discussion yields

\begin{Prop}\label{PropSolutE}
$(v,w)\in \cV\times\cW$ is a critical point of $J$ if and only if
$E=v+\nabla w\in W_0^p(\curl;\Om)=\cV\oplus\nabla\cW$ is a critical point of $\fJ$, hence a solution of \eqref{eq:problem}.
\end{Prop}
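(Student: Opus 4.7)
The proposition is essentially the chain rule applied to the Helmholtz decomposition, together with the observation that the Euler--Lagrange equation of $\fJ$ on $W_0^p(\curl;\Om)$ is exactly the weak formulation of \eqref{eq:problem}.

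The plan is as follows. First, let $T:\cV\times\cW\to W_0^p(\curl;\Om)$ be the map $T(v,w)=v+\nabla w$. By the decomposition $W_0^p(\curl;\Om)=\cV\oplus\nabla\cW$ established just above the proposition, together with the fact that $\nabla:\cW\to\nabla\cW$ is an isometric isomorphism (since $\|w\|_\cW=|\nabla w|_p$) and $\cV$ is a closed subspace of $W_0^p(\curl;\Om)$, the operator $T$ is a continuous linear bijection; by the open mapping theorem it is a topological isomorphism. By definition $J=\fJ\circ T$, so the chain rule gives
\[
J'(v,w)(\phi,\psi)=\fJ'(v+\nabla w)\bigl(\phi+\nabla\psi\bigr)
\qquad\text{for all }(\phi,\psi)\in\cV\times\cW,
\]
which indeed matches the formula for $J'$ already written out before the statement. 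Surjectivity of $T$ then gives the equivalence: $J'(v,w)=0$ iff $\fJ'(v+\nabla w)$ annihilates every element of $\cV\oplus\nabla\cW=W_0^p(\curl;\Om)$, i.e.\ iff $\fJ'(v+\nabla w)=0$.

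Next I would verify that a critical point $E$ of $\fJ$ on $W_0^p(\curl;\Om)$ is a weak solution of \eqref{eq:problem}. Since $C_0^\infty(\Om,\R^3)\subset W_0^p(\curl;\Om)$ by the very definition of $W_0^p(\curl;\Om)$, testing $\fJ'(E)=0$ against arbitrary $\Phi\in C_0^\infty(\Om,\R^3)$ gives
\[
\int_\Om\langle\curlop E,\curlop\Phi\rangle\,dx+\la\int_\Om\langle E,\Phi\rangle\,dx=\int_\Om\langle f(x,E),\Phi\rangle\,dx,
\]
which is exactly the statement that $\curlop(\curlop E)+\la E=f(x,E)$ in $\cD'(\Om,\R^3)$. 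The boundary condition $\nu\times E=0$ is encoded in the requirement $E\in W_0^p(\curl;\Om)\subset H_0(\curl;\Om)=\ker\gamma_t$, which was recorded in Section~3.

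There is no real obstacle here: the growth condition (F3) and the embedding $W_0^p(\curl;\Om)\hookrightarrow L^p(\Om,\R^3)$ have already been used to justify that $\fJ$ and $J$ are of class $\cC^1$ with the derivative formulas displayed before the proposition, and the Helmholtz splitting is stated as a fact. The only point worth flagging is that the equivalence relies on $\nabla\cW$ being a closed subspace of $W_0^p(\curl;\Om)$, so that $T$ is truly onto and admits a continuous inverse; but this has already been established in the discussion of the functional setting. The proof therefore reduces to invoking the chain rule together with the direct sum decomposition.
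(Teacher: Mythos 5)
Your proof is correct and is essentially the same argument the paper uses: the paper derives the formula $\fJ'(v+\nabla w)(\phi+\nabla\psi)=J'(v,w)(\phi,\psi)$ and then states the proposition with "our preceding discussion yields", which is just the chain-rule/direct-sum observation you spell out. You add the helpful extra detail that $T(v,w)=v+\nabla w$ is a topological isomorphism and that testing against $C_0^\infty(\Om,\R^3)$ recovers the distributional equation and the encoded boundary condition, but the underlying idea is identical.
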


Observe that in the case of our model nonlinearity $F(x,u)=\frac1p|u|^p$ the functionals are of class $\cC^2$ with second variation of $J$ at $(0,0)$ given by
\begin{equation*}
J''(0,0)[(\phi,\psi),(\phi,\psi)]
 = \|\phi\|^2_\cV + \la\int_\Om(|\phi|^2+|\nabla\psi|^2)dx.
\end{equation*}
Since the $L^2$-norm is weaker than the $L^p$-norm, $J''(0)$ is not positive definite on $\cV\times\cW$. Even if $\la>0$ is positive, $J$ does not satisfy the mountain pass geometry. Setting
\[
\begin{aligned}
I(v,w)
 &:= -\frac\la2\int_{\Om}|v+\nabla w|^2\;dx + \int_{\Om}F(x,v+\nabla w)\;dx\\
 &= -\frac\la2\int_{\Om}(|v|^2+|\nabla w|^2)\;dx + \frac1p|v+\nabla w|_p^p
\end{aligned}
\]
we see that $I'$ is not (sequentially) weak-to-weak$^*$ continuous because
$v_n+\nabla w_n\weakto v+\nabla w$ in $L^p(\Om;\R^3)$ does not imply
$|v_n+\nabla w_n|^{p-2}(v_n+\nabla w_n)\weakto |v+\nabla w|^{p-2}(v+\nabla w)$ in $L^{p'}$. 

\section{Critical point theory and the Nehari-Pankov manifold}\label{sec:Nehari}

Let $X$ be a reflexive Banach space with norm $\|\cdot\|$ and with a topological direct sum decomposition $X = X^+\oplus \tX$. For $u \in X$ we denote by $u^+ \in X^+$ and $\tu  \in \tX$ the corresponding summands so that $u = u^++\tu $. We may also assume that $\|u\|^2 = \|u^+\|^2+\|\tu \|^2$. Moreover we assume that
$X^+\ni u \mapsto \|u\|^2\in\R$ is a $\cC^1$-map, so that the unit sphere
$S^+ := \{u\in X^+:\|u\|=1\}$ in $X^+$ is a $\cC^1$-submanifold of $X^+$. In addition to the norm topology we need the topology $\cT$ on $X$ which is the product of the norm topology in $X^+$ and the weak topology in $\tX$. In particular, $u_n\cTto u$ provided that $u_n^+ \to u^+$ and $u_n' \weakto \tu $. On bounded subsets of $X$ the topology $\cT$ coincides with the metrizable topology considered by Bartsch and Ding \cite{BartschDing} and for Hilbert spaces by Kryszewski and Szulkin \cite{KryszSzulkin}.

We consider a functional $J\in\cC^1(X,\R)$ of the form
\begin{equation}\label{EqJ}
J(u) = \frac12\|u^+\|^2-I(u) \quad\text{for $u=u^++\tu \in X^+\oplus \tX$}
\end{equation}
such that the following assumptions hold:
\begin{itemize}
\item[(A1)] $I\in\cC^1(X,\R)$ and $I(u)\ge I(0)=0$ for any $u\in X$.
\item[(A2)] $I$ is $\cT$-sequentially lower semicontinuous:
    $u_n\cTto u\quad\Longrightarrow\quad \liminf I(u_n)\ge I(u)$
\item[(A3)]  If $u_n\cTto u$ and $I(u_n)\to I(u)$ then $u_n\to u$.
\end{itemize}
The geometry of $J$ is described by the following assumptions; see \cite[Section~4]{SzulkinWethHandbook}. For $u\in X\setminus \tX$ let
\[
X(u):=\R u\oplus \tX\qquad\text{and}\qquad\wh{X}(u) := \R^+u\oplus \tX
\]
where $\R^+=[0,\infty)$.
\begin{itemize}
\item[(A4)] There exists $r>0$ such that $a:=\inf\limits_{u\in X^+:\|u\|=r} J(u)>0$.
\item[(A5)] For every $u\in X\setminus \tX$ there exists a unique critical point
    $0\ne \wh{m}(u)\in\wh{X}(u)$ of $J|_{X(u)}$. Moreover, $\wh{m}(u)$ is the unique global maximum of $J|_{\wh{X}(u)}$.
\item[(A6)] There exists $\de>0$ such that $\|\wh{m}(u)^+\|\ge\de$ for all $u\in X\setminus \tX$. Moreover, $\wh{m}$ is bounded on compact subsets of $X\setminus \tX$.
\end{itemize}

Now we define the Nehari-Pankov manifold
\[
\cN := \{\wh{m}(u): u\in X\setminus \tX\} \stackrel{(A5)}{=} \{u\in X\setminus \tX: J'(u)|_{X(u)}=0\}.
\]
This manifold has been introduced by Pankov \cite{Pankov}. In the case $X=X^+$, $\tX=0$, it yields the well known Nehari manifold.
$\cN$ is homeomorphic to $S^+$, but observe that $\cN$ need not be differentiable because $J$ is only of class $\cC^1$. One can show that a critical point of $J|_\cN$, in the sense of metric critical point theory as in \cite{Corvellec-Degiovanni-Marzocchi:1993}, is a critical point of $J$. At first sight it is not even clear that a minimizer of $J$ on $\cN$ is a critical point of $J$. Observe that $\inf_\cN J > J(0)=0$ by (A4)-(A5).

We say that $J$ satisfies the $(PS)_c^\cT$-condition in $\cN$ if every $(PS)_c$-sequence in $\cN$ has a subsequence which converges in $\cT$:
\[
u_n \in \cN,\ J'(u_n) \to 0,\ J(u_n) \to c \qquad\Longrightarrow\qquad
u_n \cTto u\in X\ \text{ along a subsequence}
\]

\begin{Th}\label{ThLink1}
Let $J \in \cC^1(X,\R)$ satisfy (A1)-(A6) and set $c_0 = \inf_\cN J$. Then the following holds:

a) $J$ has a $(PS)_{c_0}$-sequence in $\cN$.

b) If $J$ satisfies the $(PS)_{c_0}^\cT$-condition in $\cN$ then $c_0$ is achieved by a critical point of $J$.

c) If $J$ satisfies the $(PS)_c^\cT$-condition in $\cN$ for every $c$ and if $J$ is even then it has an unbounded sequence of critical values.
\end{Th}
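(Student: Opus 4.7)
The plan is to reduce the problem on the possibly non-differentiable manifold $\cN$ to one on the $\cC^1$-sphere $S^+\subset X^+$ via the map $\hat m$ from (A5). The technical core is a lemma: writing $\hat m(w)=s(w)w+\tilde z(w)$ for $w\in S^+$ with $s(w)>0$ and $\tilde z(w)\in\tX$, the maps $s:S^+\to(0,\infty)$ and $\tilde z:S^+\to\tX$ are norm-continuous (by uniqueness in (A5), boundedness in (A6), and the $\cT$-upper semicontinuity of $J$ implied by (A2) combined with the norm-continuity of $\|\cdot^+\|^2$); consequently $\hat m|_{S^+}:S^+\to\cN$ is a homeomorphism with inverse $u\mapsto u^+/\|u^+\|$, and $\Psi:=J\circ\hat m\in\cC^1(S^+,\R)$ satisfies
\[
\Psi'(w)[\zeta]=\|\hat m(w)^+\|\cdot J'(\hat m(w))[\zeta]\quad\text{for }\zeta\in T_wS^+.
\]
Differentiability is obtained by differentiating along a path in $S^+$ and exploiting that the $\hat X(w)$-component of $J'(\hat m(w))$ vanishes by (A5). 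Combined with $\|\hat m(w)^+\|\ge\delta$ from (A6), the identity yields a bijection between critical points (and $(PS)$-sequences) of $\Psi$ on $S^+$ and those of $J$ on $\cN$.

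Given this reduction, part (a) follows from Ekeland's variational principle applied to $\Psi$ on the complete $\cC^1$-Finsler manifold $S^+$ at $\inf_{S^+}\Psi=c_0$, which is positive by (A4)--(A5); the resulting sequence $w_n\in S^+$ with $\Psi(w_n)\to c_0$ and $\Psi'(w_n)\to 0$ lifts to the required $(PS)_{c_0}$-sequence $u_n:=\hat m(w_n)\in\cN$. For part (b), I extract a subsequence with $u_n\cTto u_0$ from $(PS)_{c_0}^\cT$. Norm convergence $u_n^+\to u_0^+$ together with $\|u_n^+\|\ge\delta$ from (A6) forces $u_0^+\ne 0$, so $w_0:=u_0^+/\|u_0^+\|\in S^+$ is the norm limit of $w_n$; continuity of $\Psi$ then gives $\Psi(w_0)=c_0$, so $w_0$ is a global minimum of $\Psi$ on the $\cC^1$-manifold $S^+$, hence $\Psi'(w_0)=0$, and $\hat m(w_0)\in\cN$ is the desired critical point of $J$ achieving $c_0$.

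For part (c), evenness of $J$ makes $\Psi$ even on $S^+$ under $w\mapsto -w$. I would invoke the equivariant Lusternik--Schnirelmann--Clark theorem on $S^+$ applied to the minimax values
\[
c_k:=\inf\bigl\{\sup_{w\in A}\Psi(w):\ A\subset S^+\text{ closed, symmetric, }\gamma(A)\ge k\bigr\},\qquad k\in\N,
\]
with $\gamma$ the Krasnoselskii genus. Since $S^+$ has infinite genus in the infinite-dimensional $X^+$ relevant here, each $c_k$ is finite; the $(PS)_{c_k}^\cT$-condition for $J$ on $\cN$ transfers through the correspondence to the $(PS)_{c_k}$-condition for $\Psi$ on $S^+$, so each $c_k$ is a critical value of $\Psi$ and hence of $J$. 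Unboundedness $c_k\to\infty$ follows from the standard genus-continuity argument: were the values to stabilise at some $c$, the critical set at level $c$ would have infinite genus, contradicting the finite genus of compact symmetric subsets of $S^+$.

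The main obstacle is the reduction lemma itself: since $\cN$ is not a priori a $\cC^1$-submanifold of $X$, one cannot run critical-point theory directly on $\cN$, and all analytic data must be transferred to $S^+$ through $\hat m$. Its construction demands a delicate interplay of the $\cT$-topology, the semicontinuity assumptions (A2)--(A3), and the uniqueness and boundedness conditions (A5)--(A6); once it is in place, parts (a)--(c) become standard applications of critical-point theory on the smooth manifold $S^+$.
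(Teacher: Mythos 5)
Your proposal follows essentially the same route as the paper: both reduce the problem to the $\cC^1$ functional $\Psi=J\circ\hat m$ on the sphere $S^+$ via the Nehari--Pankov map $\hat m$ (with the homeomorphism $\hat m|_{S^+}$, the $\cC^1$-regularity of $\Psi$, the derivative identity $\Psi'(w)=\|\hat m(w)^+\|\,J'(\hat m(w))|_{T_wS^+}$, and the one-to-one correspondence of $(PS)$-sequences and critical points all taken from the Szulkin--Weth framework), and both then conclude (a) by boundedness below and Ekeland, (b) by passing the $\cT$-limit of the $(PS)$-sequence to a minimizer of $\Psi$ using (A6)/(A1)--(A2) to ensure the $X^+$-part survives, and (c) by even Ljusternik--Schnirelmann/genus theory once the $(PS)^\cT$-condition for $J$ on $\cN$ is transferred to the $(PS)$-condition for $\Psi$ on $S^+$. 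The argument is correct and matches the paper's proof in structure and substance.
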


\begin{proof}
As in \cite[Section~4]{SzulkinWethHandbook} one proves that
\begin{itemize}
\item[(i)] $m := \wh{m}|_{S^+}:S^+\to\cN$ is a homeomorphism with inverse $\cN \to S^+$, $u \mapsto u^+/\|u^+\|$.
\item[(ii)] $J\circ m:S^+\to\R$ is $\cC^1$.
\item[(iii)] $(J\circ m)'(u) = \|m(u)^+\|\cdot J'(u)|_{T_uS^+}:T_uS^+\to\R$ for every $u\in S^+$.
\item[(iv)] $(u_n)_n\subset S^+$ is a Palais-Smale sequence for $J\circ m$ if, and only if, $(m(u_n))_n$ is a Palais-Smale sequence for $J$ in $\cN$.
\item[(v)] $u\in S^+$ is a critical point of $J\circ m$ if, and only if, $m(u)$ is a critical point of $J$.
\item[(vi)] If $J$ is even, then so is $J\circ m$.
\end{itemize}
The existence of a $(PS)_{c_0}$-sequence $(u_n)_n$ for $J$ in $\cN$ follows from (ii) and (iv) because $c_0 = \inf J\circ m$. We claim that
\begin{itemize}
\item[(vii)] If $J$ satisfies the $(PS)_c^\cT$-condition in $\cN$ for some $c>0$ then $J\circ m$ satisfies the $(PS)_c$-condition.
\end{itemize}
In order to see this consider a $(PS)_c$-sequence $(u_n)_n$ for $J\circ m$. Then $(m(u_n))_n$ is a Palais-Smale sequence for $J$ in $\cN$ by (iv), hence $m(u_n)\cTto v$ after passing to a subsequence. This implies $m(u_n)^+\to v^+$ and moreover, using (A2),
\[
0<c = \lim_{n\to\infty}J(m(u_n)) \leq J(v).
\]
Now (A1) implies $v^+\neq 0$, hence $m(u_n)^+\ne0$ for $n$ large. From the continuity of $m$ we deduce
\[
m(u_n) = m(m(u_n)^+/\|m(u_n)^+\|)\to m(v^+/\|v^+\|),
\]
and therefore $v=m(v^+/\|v^+\|)\in\cN$ and $m(u_n)\to v$. It follows that
\[
u_n = m(u_n)^+/\|m(u_n)^+\| \to v^+/\|v^+\|.
\]
This proves (vii).\\
Next observe that if $J$ satisfies the $(PS)_{c_0}^\cT$-condition in $\cN$ then $c_0$ is achieved by a critical point $u\in S^+$ of $J\circ m$, hence $m(u)\in\cN$ is a critical point of $J$ with $J(m(u))=c_0$. This proves b).\\
Finally c) follows from standard Ljusternik-Schnirelman theory. Under the conditions of c) the functional $J\circ m$ is even, bounded below, and satisfies the Palais-Smale condition. Hence it has an unbounded sequence of critical values, and so does $J$ by (v).
\end{proof}

Assumptions (A5)-(A6) can be checked with the help of the following conditions.
\begin{itemize}
\item[(B1)] $\|u^+\|+I(u)\to\infty$ as $\|u\|\to\infty$.
\item[(B2)] $I(t_nu_n)/t_n^2\to\infty$ if $t_n\to\infty$ and $u_n^+\to u^+$ for some $u^+\neq 0$ as $n\to\infty$.
\item[(B3)] $\frac{t^2-1}{2}I'(u)[u] + tI'(u)[v] + I(u) - I(tu+v)<0$ for every $u\in X$, $t\ge 0$, $v\in \tX$ such that $u\neq tu+v$.
\end{itemize}

\begin{Prop}\label{prop:geometry}
Let $J\in\cC^1(X,\R)$ satisfy (A1)-(A2), (A4), (B1)-(B3) and let $X^+$ be a Hilbert space with the scalar product such that $\langle u,u\rangle=\|u\|^2$ for any $u\in X^+$. Then (A5) and (A6) hold.
\end{Prop}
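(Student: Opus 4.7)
The plan is to reduce to the case $u\in S^+\subset X^+$ (using that $X(u)=X(u^+)$ and $\wh X(u)=\wh X(u^+)$), parametrize $\wh X(u)$ as $\{tu+v:t\geq 0,\;v\in\tX\}$, and argue in three steps: first produce a global maximum of $J|_{\wh X(u)}$, then use (B3) to identify it as the unique nonzero critical point of $J|_{X(u)}$ in $\wh X(u)$, and finally derive the bounds in (A6).

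\emph{Step 1 (existence of a maximizer).} I first establish coercivity on $\wh X(u)$: $J(t_n u+v_n)\to-\infty$ whenever $t_n+\|v_n\|\to\infty$. If $(t_n)$ stays bounded then $(t_n u+v_n)^+=t_n u$ is bounded, so (B1) forces $I(t_n u+v_n)\to\infty$; if $t_n\to\infty$, set $w_n:=u+v_n/t_n$, note $w_n^+=u\ne 0$, and apply (B2) to get $I(t_n u+v_n)/t_n^2\to\infty$. Either way $J(t_n u+v_n)=\tfrac12 t_n^2-I(t_n u+v_n)\to-\infty$. Since $t_n u+v_n\cTto tu+v$ is equivalent to $t_n\to t$ and $v_n\weakto v$, the restriction $J|_{\wh X(u)}$ is $\cT$-sequentially upper semicontinuous by (A2) together with the continuity of $t\mapsto t^2/2$. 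Any maximizing sequence is therefore bounded and, by reflexivity of $\tX$, has a $\cT$-convergent subsequence whose limit $u_0=t_0 u+v_0\in\wh X(u)$ achieves $\sup J|_{\wh X(u)}$. From (A4) and (A1) one has $J(u_0)\geq J(ru)\geq a>0$, whereas $J|_\tX\leq 0$, so $t_0>0$. Since $u_0$ then lies in the relative interior of $\wh X(u)$ inside $X(u)$, it is a critical point of $J|_{X(u)}$.

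\emph{Step 2 (uniqueness).} Let $u_0=t_0 u+v_0\in\wh X(u)$, $t_0>0$, be any critical point of $J|_{X(u)}$. Testing $J'(u_0)=0$ against $u$ and against $\tX$ gives $I'(u_0)[u]=t_0$ and $I'(u_0)|_\tX=0$, hence $I'(u_0)[u_0]=t_0^2=\|u_0^+\|^2$. Substituting into (B3) at $u_0$, for any $t\geq 0$ and $w\in\tX$ with $tu_0+w\ne u_0$,
$$
J(tu_0+w)-J(u_0)=\tfrac{t^2-1}{2}\|u_0^+\|^2+I(u_0)-I(tu_0+w)<0.
$$
Since $t_0>0$ yields $\R^+u_0+\tX=\wh X(u)$, this shows $u_0$ is the strict global maximum of $J|_{\wh X(u)}$, hence the unique critical point of $J|_{X(u)}$ in $\wh X(u)\setminus\tX$. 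A nonzero critical point $w\in\tX$ would give $I'(w)[w]=0$, and (B3) at $w$ with $t=0$, $v=0$ would force $I(w)<\tfrac12 I'(w)[w]=0$, contradicting (A1). Combined with Step 1, $\wh m(u):=u_0$ is well defined and (A5) holds.

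\emph{Step 3 ((A6) and the main obstacle).} The lower bound $\|\wh m(u)^+\|\geq\sqrt{2a}$ follows from $J(\wh m(u))\geq a$ together with $J(\wh m(u))\leq\tfrac12\|\wh m(u)^+\|^2$ (the latter by (A1)); set $\delta:=\sqrt{2a}$. For boundedness on a compact set $K\subset X\setminus\tX$, I argue by contradiction: suppose $u_n\to u_*\in K$ but $\|\wh m(u_n)\|\to\infty$, and write $\wh m(u_n)=t_n u_n+v_n$. If $(t_n)$ is bounded then $\|v_n\|\to\infty$, and (B1) forces $I(\wh m(u_n))\to\infty$, contradicting $J(\wh m(u_n))\geq a$. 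If $t_n\to\infty$ then $(u_n+v_n/t_n)^+=u_n^+\to u_*^+\ne 0$, so (B2) yields $I(\wh m(u_n))/t_n^2\to\infty$, contradicting the elementary bound $I(\wh m(u_n))\leq\tfrac12 t_n^2\|u_n^+\|^2$. The main obstacle throughout is the absence of weak-to-weak$^*$ continuity of $I'$; assumptions (A2) and (B1)--(B2) compensate by supplying $\cT$-semicontinuity together with the precise one-dimensional coercivity and nondegeneracy needed on each $\wh X(u)$.
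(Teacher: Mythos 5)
Your proof is correct and follows essentially the same strategy as the paper's: coercivity and $\cT$-upper-semicontinuity on $\wh X(u)$ (from (A2), (B1), (B2)) to produce a maximizer, then the identity $I'(u_0)[u_0]=\|u_0^+\|^2$, $I'(u_0)|_{\tX}=0$ at a critical point combined with (B3) for uniqueness, and finally (A4) with the contradiction argument via (B1)/(B2) for (A6). The reduction to $S^+$ and the short remark ruling out nonzero critical points in $\tX$ are minor organizational additions and do not alter the route.
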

\begin{proof}
Let $u\in X\setminus \tX$. Assume that
$t_nu+\tu _n\weakto t_0u+\tu _0$ as $n\to\infty$ where $\tu_n\in \tX$, $t_n\geq 0$ for $n\geq 0$. Then $t_n\to t_0$ and from the $\cT$-sequentially lower semicontinuity of $I$ we obtain
\begin{eqnarray*}
\liminf_{n\to\infty}(-J(t_nu+\tu _n))\geq -J(t_0u+\tu _0).
\end{eqnarray*}
Therefore $-J$ is weakly sequentially lower semi-continuous on $\wh{X}(u)$.
Moreover, $-J$ is coercive on $\wh{X}(u)$ by (B1)-(B2). Thus there exists a global maximum $\wh{m}(u)\in \wh{X}(u)$ of $J|_{\wh{X}(u)}$. In view of (A4) we easily see that $J(\wh{m}(u))\geq a >0$ and thus $\wh{m}(u)\notin\tX$.
Therefore $\wh{m}(u)$ is a critical point of $J|_{X(u)}$. Now we show the uniqueness of $\wh{m}(u)$. Let
$u\in X\setminus \tX$ be any critical point of $J|_{X(u)}$. Observe that for any $t\geq 0$ and $v\in \tX$
$$
J(t u+v)-J(u) = \frac{t^2-1}{2}\|u^+\|^2 + I(u) - I(tu+v).
$$
Then by (B3)
$$
J(t u+v)-J(u) = \frac{t^2-1}{2}I'(u)[u] + tI'(u)[v] + I(u) - I(tu+v) < 0
$$
provided that $u\neq tu+v$. Therefore $u$ is the unique global maximum of $J|_{X(u)}$ and (A5) holds.

In order to prove (A6) observe that the first statement follows from (A4). For the second statement we claim that for $u_0\in X\setminus\tX$ there exists $R>0$ such that $J\le0$ on $\wh{X}(u)\setminus B(0,R)$ for $u\in X\setminus\tX$ close to $u_0$. If not there exists a sequence $t_nu_n^+ + v_n \in \R^+u_n^+\oplus\tX$ such that $X^+ \ni u_n^+ \to u_0^+ \ne 0$, $\|t_nu_n+v_n\| \to \infty$ and $J(t_n u_n^+ + v_n) > 0$. Then by (B1) and using
$$
\frac12 t_n^2\|u_n^+\|^2 > I(t_n u_n^+ + v_n),
$$
we deduce $t_n\to\infty$. Therefore $(B2)$ implies
$$
J(t_nu_n^+ + v_n)
 = t_n^2\Big(\frac12\|u_n^+\|^2-\frac{I(t_n(u_n^+ + v_n/t_n))}{t_n^2}\Big)
 \to -\infty,
$$
which contradicts $J(t_n u_n^+ + v_n) > 0$. This proves the claim, from which we can then deduce that $\|\wh{m}(u)\| \le R$ for $u$ close to $u_0$ because $J(\wh{m}(u))\geq a>0$.
\end{proof}

Observe that (A1), (A4) and (B1)-(B2) imply the linking geometry of $J$, i.e. for any $u^+\in X^+$ there are $R>r>0$ such that
\begin{equation*}
\sup_{\partial M(u^+)} J\leq 0=J(0)<\inf_{S^+_r}J
\end{equation*}
where
\begin{eqnarray*}
\partial M(u^+)&:=&\{u=t u^++\tu \in X |\; \tu \in \tX,\;
(\|u\|=R,t\geq 0)\textnormal{ or }(\|u\|\leq R,t=0) \},\\
S^+_r&:=&\{ u^+\in X^+,\|u^+\|=r\}.
\end{eqnarray*}

\section{Proof of the Theorem~\ref{thm:main}}\label{sec:proof-main}

Recall that the spectrum of \eqref{EgEigenvalue} is discrete and consists of an unbounded sequence of eigenvalues $0<\la_1\le\la_2\le\dots\le\la_k\to\infty$ with finite multiplicities. Let $e_k\in\cV$ be the eigenfunction corresponding to $\la_k$, and recall that these eigenfunctions are orthogonal with respect to the scalar products in $H(\curl;\Om)$ and  $L^2(\Om;\R^3)$. Let
\[
n:=\min\{k\in\N_0:\la_{k+1}>0\} = \max\{k\in\N_0:\la_k\le0\}
\]
be the dimension of the semi-negative eigenspace; here $\la_0:=-\infty$. The quadratic form $Q(v):=\int_\Om \left(|\curlop v|^2+\la|v|^2\right)$ is positive definite on the space
\[
\cV^+ := \span\{e_k:k>n\},
\]
and it is negative semidefinite on
\[
\tcV := \span\{e_1,\dots,e_n\}.
\]
Here $\tcV=0$ if $n=0$, of course. Observe that there exists $\de>0$ such that
\begin{equation}\label{NormInU0}
\int_{\Om}|\curlop v|^2 + \la|v|^2dx \geq \delta\int_{\Om}|\curlop v|^2dx
\qquad\text{for any }v\in \cV^+,
\end{equation}
and
\begin{equation}\label{CondInU1}
\int_{\Om}|\curlop v|^2 + \la|u|^2dx \leq 0
\qquad\text{for any }v\in \tcV.
\end{equation}
Moreover, if $\la_n<0$, i.~e.\ the kernel of the operator $(\curlop)^2+\la$ is trivial, then
\begin{equation}\label{CondInU1Ker}
\int_{\Om}|\curlop v|^2 + \la|u|^2dx \leq -\de\int_{\Om}|\curlop v|^2
\qquad\text{for any }v\in \tcV,
\end{equation}
provided $\de>0$ is small.

For any $v\in\V$ we denote $v^+\in\V^+$ and $\tv \in\tcV$ the corresponding summands such that $v=v^++\tv $.

Now we consider the functional $J:X=\cV\times\cW\to\R$ from Section~3 defined by
\[
\begin{aligned}
J(v,w)
 &= \frac12\int_{\Om}|\curlop v|^2\;dx + \frac\la2\int_{\Om} |v+\nabla w|^2\;dx
    - \int_{\Om}F(x,v+\nabla w)\;dx\\
 &= \frac12\|v\|_\cV^2 + \frac\la2\int_{\Om}(|v|^2+|\nabla w|^2)\;dx
    - \int_{\Om}F(x,v+\nabla w)\;dx.
\end{aligned}
\]
Setting
\[
X^+ := \{(v,0)\in\cV\times\cW:v\in\cV^+\} = \cV^+\times\{0\}\subset X
\]
and
\[
\tX := \{(v,w)\in\cV\times\cW:v\in\tcV\} = \tcV\times\cW\subset X
\]
we shall prove Theorem~\ref{thm:main} by showing that $J$ satisfies the assumptions (A1)-(A6) from Theorem~\ref{ThLink1}. Clearly $J$ has the form as in \eqref{EqJ} with
\[
I(v,w) = -\frac12\|\tv\|_\cV^2 - \frac\la2\int_\Om\left(|v|^2+|\nabla w|^2\right)
          + \int_\Om F(x,u+\nabla w).
\]

Observe that assumptions (F1)-(F3) imply that
for any $\eps>0$ there is a constant $c_{\eps}>0$ such that
\begin{equation}\label{eqest1}
|f(x,u)|\leq \eps|u| + c_{\eps}|u|^{p-1} \textnormal{ for any } x\in\Om,\;u\in\R^3
\end{equation}
and
\begin{equation}\label{eqest2}
\int_{\Om} F(x,u)\; dx \leq \eps|u|^2_{2} + c_{\eps}|u|^{p}_{p}
\textnormal{ for any } u\in L^p(\Om,\R^3).
\end{equation}

The next lemma shows that (A1)-(A2), (A4), (B1)-(B2) hold.

\begin{Lem}\label{LinkingLemma}
a) $I$ is of class $\cC^1$,  $I(v,w)\geq 0$ for any $(v,w)\in\V\times\W$, and $I$ is $\cT$-sequentially lower semicontinuous.

b) There is $r>0$ such that
$\displaystyle 0<\inf_{\stackrel{v\in \V^+}{\|v\|_\V=r}}J(v,0)$.

c) $\|v_n^+\|_\V + I(v_n+\nabla w_n)\to\infty$ as $\|(v,w)\|\to\infty$.

d) $I(t_n(v_n,w_n))/t_n^2\to\infty$ if $t_n\to\infty$ and $v_n^+\to v_0^+\neq 0$ as $n\to\infty$.
\end{Lem}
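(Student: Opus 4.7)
The plan is to exploit the identity
$$I(v,w) = -\tfrac12\int_\Om\bigl(|\curlop\tv|^2+\la|\tv|^2\bigr)\, dx - \tfrac\la2\int_\Om\bigl(|v^+|^2+|\nabla w|^2\bigr)\, dx + \int_\Om F(x,v+\nabla w)\, dx,$$
in which each summand is nonnegative: the first by \eqref{CondInU1} on $\tcV$, the second because $\la\le0$, and the third by (F4). For (a) this immediately gives $I\geq 0$, and $\cC^1$-regularity is standard from (F1)-(F3). For $\cT$-lower semicontinuity, if $(v_n,w_n)\cTto(v,w)$ then $v_n^+\to v^+$ in $\cV$, finite-dimensionality of $\tcV$ upgrades $\tv_n\weakto\tv$ to norm convergence, and $\nabla w_n\weakto\nabla w$ in $L^p$, hence also in $L^2$ (using $p>2$ and $\Om$ bounded). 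The compact embedding $\cV\hookrightarrow L^q$ for $q<6$ makes $v_n\to v$ strongly in $L^p\cap L^2$, so $v_n+\nabla w_n\weakto v+\nabla w$ in $L^p$. Each summand of $I$ is then continuous (the $\tv_n$ and $|v^+_n|_2^2$ pieces), weakly lower semicontinuous via $-\la\ge0$ (the $|\nabla w|_2^2$ piece), or weakly lower semicontinuous by convexity (F5) (the $F$-integral).

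For (b), I would combine \eqref{NormInU0} with \eqref{eqest2} and the continuous embeddings $\cV\hookrightarrow L^2\cap L^p$ to obtain $J(v,0)\ge(\tfrac\delta2-\eps C)\|v\|_\cV^2 - c_\eps C'\|v\|_\cV^p$ on $\cV^+$, after which choosing $\eps$ small and then $r=\|v\|_\cV$ small yields a strictly positive infimum on the sphere.

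Part (c) is a contraposition: assume $\|v_n^+\|_\cV+I(v_n,w_n)$ is bounded. The nonnegative decomposition of $I$ forces each summand to be bounded, and (F4) bounds $|v_n+\nabla w_n|_p$. The decisive step is to recover $\tv_n$: let $P:L^2(\Om,\R^3)\to\tcV$ be the $L^2$-orthogonal projection. Because $\cV^+\perp\tcV$ in $L^2$ and $\int\nabla w\cdot v\,dx=0$ for all $v\in\cV$ and all $w\in\cW=W^{1,p}_0(\Om)$ (by density of $C^\infty_0$ in $W^{1,p}_0$ and $\div v=0$), one has $P(v_n+\nabla w_n)=\tv_n$. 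Since $\tcV$ is finite-dimensional, $P$ is continuous on $L^p$, hence $\|\tv_n\|_\cV$ is bounded; then $|\nabla w_n|_p\le|v_n+\nabla w_n|_p+|v_n^+|_p+|\tv_n|_p$ is bounded, and orthogonality $\|v_n\|_\cV^2=\|v_n^+\|_\cV^2+\|\tv_n\|_\cV^2$ finishes the argument.

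For (d), expanding
$$t_n^{-2}I(t_n v_n,t_n w_n) = -\tfrac12\|\tv_n\|_\cV^2 - \tfrac\la2\bigl(|v_n|_2^2+|\nabla w_n|_2^2\bigr) + t_n^{-2}\int_\Om F(x,t_n(v_n+\nabla w_n))\,dx$$
shows the three summands are nonnegative, and (F4) bounds the last from below by $d\,t_n^{p-2}|v_n+\nabla w_n|_p^p$. Since $t_n^{p-2}\to\infty$, the conclusion reduces to $\liminf|v_n+\nabla w_n|_p>0$, and I expect this to be the main obstacle. If instead a subsequence has $|v_n+\nabla w_n|_p\to0$, then $v_n+\nabla w_n\to0$ in $L^2$, and combined with $v_n^+\to v_0^+$ in $L^2$ this forces $\tv_n+\nabla w_n\to -v_0^+$ in $L^2$. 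But $\tv_n+\nabla w_n$ lies in $\tcV\oplus\nabla\cW$, which is $L^2$-orthogonal to $v_0^+\in\cV^+$, forcing $v_0^+=0$ and contradicting the hypothesis $v_0^+\neq 0$. The same $L^2$-orthogonality between $\nabla\cW$ and $\cV$ is what makes the projection argument in (c) work, and its verification for $\cW=W^{1,p}_0$ rather than $H^1_0$ is the lone technical point, handled by $C^\infty_0$-approximation.
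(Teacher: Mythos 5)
Your proof is essentially correct and takes the same route as the paper: exploit the non\-negative three-term decomposition of $I$, the finite-dimensionality of $\tcV$, the $L^2$-orthogonality of $\cV^+$, $\tcV$ and $\nabla\cW$, coercivity from (F4), and weak lower semicontinuity from (F5). Parts (c) and (d) are phrased contrapositively rather than via the paper's direct chain of inequalities, but the ingredients and the key steps (in particular, deducing $|v_n+\nabla w_n|_p$ bounded/unbounded, and that $|v_n+\nabla w_n|_p\to 0$ contradicts $v_0^+\ne 0$ via $L^2$-orthogonality) are the same. One small slip in (d): in the displayed identity you split off $-\tfrac12\|\tv_n\|_\cV^2$ and $-\tfrac{\la}{2}(|v_n|_2^2+|\nabla w_n|_2^2)$ and call all three summands nonnegative, but the first of those is $\le 0$; you need the grouping you stated at the outset, namely $-\tfrac12\big(\|\tv_n\|_\cV^2+\la|\tv_n|_2^2\big)$ and $-\tfrac{\la}{2}\big(|v_n^+|_2^2+|\nabla w_n|_2^2\big)$, each of which is $\ge 0$ by \eqref{CondInU1} and $\la\le 0$; with that grouping the reduction to $\liminf|v_n+\nabla w_n|_p>0$ is valid and the rest of your argument goes through.
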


\begin{proof}
a)
Note that by (\ref{CondInU1})
$$
I(v,w) = -\frac12\left(\|\tv \|_\V^2 + \la |\tv |^2_{2}\right)
 - \frac\la2\int_\Om|v^+ + \nabla w|^2 + \int_\Om F(x,v+\nabla w) \geq 0.$$
Let $(v_n,w_n)\in\V\times\W$ be a sequence such that
$(v_n,w_n)\cTto (v_0,w_0)$ in $\V\times\W$. Since $\dim(\tcV)<\infty$ we may assume that $\tv_n\to \tu_0$ in $\V$. Since $F$ is convex in $u$ the map
$$
L^p(\Om,\R^3)\ni E\to\int_\Om F(x,E)\;dx\in\R
$$
is weakly sequentially lower semicontinuous and therefore
$$
\liminf_{n\to\infty}I(v_n,w_n)\geq I(v_0,w_0).
$$

b)
Note that by \eqref{NormInU0} and \eqref{eqest2} for any $u\in\V^+$
\begin{eqnarray*}
J(v,0)
 &=& \frac12\|v\|_\V^2 +\frac\la2|v|_{2}^2-\int_\Om F(x,v)\;dx
 \geq \frac\de2\|v\|_\V^2 -\eps|v|_2- c_\eps|v|_p^p\\
&\geq& \frac\de4\|v\|_\V^2 - C_1\|v\|_\V^p
\end{eqnarray*}
for some constant $C_1>0$. This implies b).

c) Suppose that $(\|v_n^+\|_\V)_{n}$ is bounded and $\|(v_n,w_n)\|\to\infty$ as $n\to\infty$. Since $\dim(\tcV)<\infty$ there holds
$|v_n+\nabla w_n|_p\to\infty$. Moreover by the orthogonality $\V^+\perp\tcV$,
$\V\perp\nabla\W$ in $L^2(\Om,\R^3)$ and by the H\"older inequality we have
\begin{equation}\label{EqOrthU_0}
\|\tv_n\|_\V^2\leq C_1|\tv_n|_2^2 \leq C_1|v_n|_{2}^2\leq C_1|v_n+\nabla w_n|_2^2
 \leq C_2|v_n+\nabla w_n|_p^2
\end{equation}
for some constants $C_2>C_1>0$. Now (F4) implies
\begin{eqnarray*}
I(v_n,w_n)
 &=& -\frac12\|\tv_n\|_\V^2 - \frac\la2|v_n+\nabla w_n|_2^2
      + \int_\Om F(x,n_n+\nabla w_n)\,dx\\\nonumber
 &\geq& -\frac12\|\tv_n\|_\V^2 - \frac\la2|v_n+\nabla w_n|_2^2
         + d|v_n+\nabla w_n|_p^p\\ \nonumber
 &\geq& -\frac{C_2}{2}|v_n+\nabla w_n|_p^2 + d|v_n+\nabla w_n|_p^p \to \infty
\end{eqnarray*}
because $|v_n+\nabla w_n|_p\to\infty$.

d) Note that
\begin{eqnarray*}
I(t_n(v_n,w_n))
 &=& -\frac12\|t_n\tv_n\|_\V^2 - \frac\la2|t_n v_n+t_n\nabla w_n|_2^2
      +\int_\Om F(x,t_nv_n+t_n\nabla w_n)\;dx\\
 &\geq& -\frac12t_n^2\|\tv_n\|_\V^2 - \frac\la2t_n^2|v_n+\nabla w_n|_2^2
         +d t_n^p|v_n+\nabla w_n|_p^p
\end{eqnarray*}
and then by \eqref{EqOrthU_0}
\begin{eqnarray*}
I(t_n(v_n,w_n))/t_n^2
 &\geq& -\frac12\|\tv_n\|_\V^2 - \frac\la2|v_n+\nabla w_n|_{2}^2
         +d t_n^{p-2}|v_n+\nabla w_n|_p^p\\
&\geq& -\frac{C_2}{2}|v_n+\nabla w_n|_p^2
        +d t_n^{p-2}|v_n+\nabla w_n|_p^p.
\end{eqnarray*}
If $\|(v_n,w_n)\|\to\infty$ then
$$
I(t_n(v_n,w_n))/t_n^2\to \infty.
$$
Suppose that $(\|(v_n,w_n)\|)_{n}$ is bounded. Then $(|v_n+\nabla w_n|_p)_n$ is bounded. If $|v_n+\nabla w_n|_p\to0$  then $|v_n+\nabla w_n|_2\to 0$ and by the orthogonality in $L^2(\Om,\R^3)$ we get $v_n\to 0$ in $L^2(\Om,\R^3)$ which contradicts $u_0\neq0$. Therefore $t_n^{p-2}|v_n+\nabla w_n|_p\to\infty$ as $n\to\infty$ and again
$$
I(t_n(v_n,w_n))/t_n^2\to \infty.
$$
\end{proof}

Let us consider the Nehari-Pankov manifold for $J$
\begin{eqnarray}\label{DefOfNehari}
\cN &:=& \{(v,w)\in(\V\times\W)\setminus (\tcV\times\W)|\;J'(v,w)[v,w]=0\\\nonumber
&&\hbox{ and }J'(v,w)[\phi,\psi]=0\hbox{ for any }(\phi,\psi)\in\tcV\times\W\}.
\end{eqnarray}

\begin{Lem}\label{LemB3check}
Condition (B3) holds.
\end{Lem}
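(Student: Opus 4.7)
The plan is to expand (B3) by exploiting the fact that the first two summands of $I$, namely $-\tfrac12\|\tv\|_\V^2$ and $-\tfrac\la2\int_\Om(|v|^2+|\nabla w|^2)\,dx$, are quadratic forms on $X$. For any quadratic form $Q$ one has the identity $\tfrac{t^2-1}{2}Q'(u)[u]+tQ'(u)[z]+Q(u)-Q(tu+z)=-Q(z)$, so writing $u=(v,w)$, $z=(\phi,\psi)\in\tX$, $E:=v+\nabla w$, $H:=\phi+\nabla\psi$, the expression on the left-hand side of (B3) telescopes to
\[
\Delta \;=\; \tfrac12\|\phi\|_\V^2 + \tfrac\la2\bigl(|\phi|_2^2+|\nabla\psi|_2^2\bigr) - \int_\Om\eta_x(t,E,H)\,dx,
\]
where
\[
\eta_x(t,E,H):=F(x,tE+H)-F(x,E)-\tfrac{t^2-1}{2}\lan f(x,E),E\ran-t\lan f(x,E),H\ran.
\]
The first term of $\Delta$ satisfies $\|\phi\|_\V^2+\la|\phi|_2^2\le0$ by \eqref{CondInU1} and $\phi\in\tcV$; the second is non-positive by $\la\le0$. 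The task therefore reduces to the pointwise bound $\eta_x\ge0$, plus strict negativity of $\Delta$ whenever $(v,w)\ne(tv+\phi,tw+\psi)$.

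The hard step is the pointwise bound $\eta_x(t,E,H)\ge0$. The degenerate cases $E=0$ and $t=0$ are handled directly from (F4). For $E\ne0$ and $t>0$, the function $(t,H)\mapsto\eta_x(t,E,H)$ is coercive on $[0,\infty)\times\R^3$ (because $F\ge d|\cdot|^p$ with $p>2$ dominates the linear and quadratic corrections at infinity), so its infimum is attained at some critical point $(t_*,H_*)$ with $t_*>0$. A direct computation of the Euler equations gives $f(x,G)=t_*f(x,E)$ and $\lan f(x,E),H_*\ran=0$, where $G:=t_*E+H_*$, from which
\[
\lan f(x,E),G\ran\;=\;\lan f(x,G),E\ran\;=\;t_*\lan f(x,E),E\ran\;\ne\;0.
\]
This is exactly the symmetry hypothesis of (F7); applying (F7) yields $F(x,E)-F(x,G)\le\tfrac{1-t_*^2}{2}\lan f(x,E),E\ran$, which rearranges to $\eta_x(t_*,H_*)\ge0$. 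The subtlety here --- and the reason for the particular shape of (F7) --- is that the symmetry fails for generic $G$ but holds automatically at critical points of $\eta_x$, precisely where it is needed.

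For the strict inequality when $(v,w)\ne(tv+\phi,tw+\psi)$, note first that if the quadratic part of $\Delta$ is strictly negative nothing more is needed. Otherwise it vanishes, which by \eqref{CondInU1} and $\la\le0$ forces $\phi$ into the $(-\la)$-eigenspace of $\curlop(\curlop)$ on $\V$ (possibly $\{0\}$) and forces $\psi=0$ when $\la<0$; (F6) then supplies (uniform) strict convexity of $F$ in each remaining case where $z\ne0$ is still possible. A refined analysis of the critical points of $\eta_x$ based on the strict version of (F7) --- whose equality case $F(x,E)=F(x,G)$ combined with $\eta_x=0$ forces $t_*=1$, and then injectivity of $f(x,\cdot)$ forces $H_*=0$ --- shows that $\eta_x(t,E(x),H(x))=0$ a.e.\ in $\Om$ can only occur if $H\equiv0$ and either $t=1$ or $E\equiv0$ a.e. By the $L^2$-orthogonality $\V\perp\nabla\W$, each of these alternatives contradicts the hypothesis $(v,w)\ne(tv+\phi,tw+\psi)$, so $\int_\Om\eta_x>0$ and $\Delta<0$.
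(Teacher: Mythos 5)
Your proof is essentially correct and follows the same strategy as the paper: reduce (B3) to a pointwise algebraic inequality for the nonlinear part, show that inequality is tight exactly at critical points where the symmetry $\lan f(x,E),G\ran=\lan f(x,G),E\ran$ holds automatically, and then apply (F7) at such points. The one technical difference is in how you reach the symmetry condition. The paper fixes $H=\phi(x)+\nabla\psi(x)$ and maximizes $\vp(\cdot,x)$ \emph{only over $t\ge0$}; the critical-point equation $\pa_t\vp(t_0,x)=0$ already gives $\lan f(x,E),t_0E+H\ran=\lan f(x,t_0E+H),E\ran$, which is all that (F7) needs. You instead minimize $\eta_x$ jointly over $(t,H')\in[0,\infty)\times\R^3$; this yields the richer structure $f(x,G)=t_*f(x,E)$ and $\lan f(x,E),H_*\ran=0$, from which the symmetry follows. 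Both routes are valid, and yours is not weaker (you prove $\eta_x\ge0$ for all $(t,H)$, a slightly stronger statement that still specializes to the needed one). Two small points of imprecision that are worth tightening: (i) you write that the infimum of $\eta_x$ is attained ``at some critical point with $t_*>0$'' — this should be phrased as a dichotomy: either the minimizer has $t_*=0$, where $\eta_x(0,E,H')=F(x,H')-F(x,E)+\tfrac12\lan f(x,E),E\ran>F(x,H')\ge0$ by (F4), or $t_*>0$ and it is an interior critical point; (ii) in the strict-inequality case your appeal to ``injectivity of $f(x,\cdot)$'' relies on strict convexity, which is supplied by (F6) precisely in the sub-cases ($\la=0$, or $-\la$ an eigenvalue) where the quadratic form can vanish — you do invoke (F6) just before, but it is worth making explicit that injectivity is \emph{only} available there, and that in the remaining sub-cases \eqref{CondInU1Ker} makes the quadratic term strictly negative so no convexity is needed. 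This is exactly how the paper organizes its $t=1$ case.
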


\begin{proof}
Let $(v,w)\in \V\times \W$, $t\geq 0$, $\phi\in\tcV$, $\psi\in\W$ satisfy
$v+\nabla w \ne t(v+\nabla w)+\phi+\nabla\psi$. Observe that
\begin{eqnarray*}
&&
I'(v,w)\left[\frac{t^2-1}{2}(v,w)+t(\phi,\psi)\right]
  + I(v,w) - I(t(v,w)+(\phi,\psi))\\
&&\hspace{1cm}
 =\frac12\|\phi\|_\V^2 + \frac\la2|\phi+\nabla\psi|_2^2 + \int_\O\vp(t,x)\;dx\\
&&\hspace{1cm}
 =\frac12\big(\|\phi\|_\V^2+\la\|\phi\|_2^2\big) + \frac\la2|\nabla\psi|_2^2
     + \int_\Om\vp(t,x)\;dx
\end{eqnarray*}
where
$$
\vp(t,x)
 :=\langle f(x,v+\nabla w),\frac{t^2-1}{2}(v+\nabla w)+t(\phi+\nabla\psi)\rangle
    + F(x,v+\nabla w)-F(x,t(v+\nabla w)+\phi+\nabla\psi).
$$
Assume that $v(x)+\nabla w(x)\neq 0$. Note that by (F4) we have $\vp(0,x)< 0$ and
$$
\lim_{t\to\infty}\vp(t,x)=-\infty.
$$
Let $t_0\geq 0$ be such that $\vp(t_0,x)=\max_{t\geq 0}\vp(t,x)$. If $t_0=0$ then $\vp(t,x)<0$ for any $t\geq 0$. Let us assume that $t_0>0$. Then
 $\partial_t\vp(t_0,x)=0$, i.e.
$$
\langle f(x,v+\nabla w),t_0(v+\nabla w)+\phi+\nabla\psi\rangle
 - \langle f(x,t_0(v+\nabla w)+\phi+\nabla\psi),v+\nabla w\rangle = 0.
$$
If $\langle f(x,v+\nabla w),t_0(v+\nabla w)+\phi+\nabla\psi\rangle = 0$ then by (F4)
\begin{eqnarray*}
\vp(t_0,x)
 &=& \langle f(x,v+\nabla w),\frac{-t_0^2-1}{2}(v+\nabla w)\rangle
      + F(x,v+\nabla w)-F(x,t_0(v+\nabla w)+\phi+\nabla\psi)\\
 &<& -t_0^2F(x,v+\nabla w)-F(x,t_0(v+\nabla w)+\phi+\nabla\psi)\\
 &\leq& 0.
\end{eqnarray*}
If $\langle f(x,v+\nabla w),t_0(v+\nabla w)+\phi+\nabla\psi\rangle\neq 0$ then by (F7)
\begin{equation}\label{eq:phi}
\begin{aligned}
\vp(t_0,x)
 &= -\frac{(t_0-1)^2}{2}\langle f(x,v+\nabla w),v+\nabla w\rangle\\
 &\hspace{1cm}
     +t_0(\langle f(x,v+\nabla w),t_0(v+\nabla w)+\phi+\nabla\psi\rangle
     -\langle f(x,v+\nabla w),v+\nabla w\rangle)\\
&\hspace{1cm}+F(x,v+\nabla w)-F(x,t_0(v+\nabla w)+\phi+\nabla\psi)\\
&\leq-\frac{(\langle f(x,v+\nabla w),\phi+\nabla\psi\rangle)^2}{2\langle f(x,v+\nabla w),v+\nabla w\rangle}\\
 &\leq 0,
\end{aligned}
\end{equation}
and if $F(x,v+\nabla w)\neq F(x,t_0(v+\nabla w)+\phi+\nabla\psi)$ then $\vp(t_0,x)<0$. If $F(x,v+\nabla w)=F(x,t_0(v+\nabla w)+\phi+\nabla\psi)$ then (F7) yields
\[\langle f(x,v+\nabla w),t_0(v+\nabla w)+\phi+\nabla\psi\rangle
     \le \langle f(x,v+\nabla w),v+\nabla w\rangle.
\]
Therefore \eqref{eq:phi} implies
$$
\vp(t_0,x)\le-\frac{(t_0-1)^2}{2}\langle f(x,v+\nabla w),v+\nabla w\rangle.
$$
As a consequence, if $t_0\neq 1$ we deduce for $t\ge0$ that $\varphi(t,x)\leq\varphi(t_0,x)<0$. Now suppose $t_0=1$. If $\vp(t,x)=\vp(t_0,x)$ for some $0<t\neq t_0$ then $\partial_t\vp(t,x)=0$ and the above considerations imply $\vp(t,x)<0$. Summing up, we have shown that if $v(x)+\nabla w(x)\neq 0$ then $\vp(t,x)\leq 0$ for any $t\geq 0$ and $\vp(t,x)< 0$ if $t\neq 1$. Since $v+\nabla w\neq 0$ in $L^2(\Om,\R^3)$ we obtain from \eqref{CondInU1} for $t\geq 0$, $t\neq 1$:
\begin{equation}\label{EqIneq}
\frac12(\|\phi\|_\V^2 + \la|\phi|_2^2) + \frac\la2|\nabla h|_2^2 + \int_\Om\vp(t,x)\,dx
 < 0.
\end{equation}
Finally we need to consider the case $t=1$, hence $\phi+\nabla\psi\neq 0$. If $f$ is strictly convex then
$$
\vp(1,x)
 = \langle f(x,v+\nabla w),\phi+\nabla\psi\rangle + F(x,v+\nabla w)
    - F(x,v+\nabla w+\phi+\nabla\psi)
 < 0
$$
provided that $\phi(x)+\nabla\psi(x)\neq 0$. Thus if $\la=0$ or $\la=-\la_n$ then again \eqref{EqIneq} holds. If $\la\neq 0$ and $\la\neq-\la_n$  then \eqref{CondInU1Ker} holds and
$$
\frac12(\|\phi\|_\V^2 + \la|\phi|_2^2) + \frac\la2|\nabla\psi|_2^2 < 0.
$$
Therefore \eqref{EqIneq} is satisfied also for $t=1$.
\end{proof}

If we assume the Ambrosetti-Rabinowitz-type condition (F8), then we can show that Palais-Smale sequences are bounded, and taking into account the compact embedding of $\V$ into $L^p(\Om)$ we find a $\cT$-convergent subsequence. Without (F8) the situation is more complicated, therefore we consider Palais-Smale sequences on the Nehari manifold $\cN$. Namely, we show that $J$ satisfies $(PS)_c^{\cT}$ condition on $\cN$.

\begin{Lem}\label{lemmaconv2}
If $\{(v_n,w_n)\}\subset \cN$ is a $(PS)_c$-sequence for some $c>0$, i.e.\
$$
J(v_n,w_n)\to c\hbox{ and }J'(v_n,w_n)\to0,
$$
then, up to a subsequence,  $(v_n,w_n)\cTto (v_0,w_0)$ in $\V\times\W$ for some $(v_0,w_0)\in\V\times\W$.
\end{Lem}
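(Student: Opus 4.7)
The plan is to verify $\cT$-convergence, where the $\cT$-topology is the product of the norm topology on $X^+=\V^+\times\{0\}$ and the weak topology on $\tX=\tcV\times\W$. I proceed in three steps: (1) bound $(v_n,w_n)$ in $\V\times\W$; (2) extract a weakly convergent subsequence; (3) upgrade the $\V^+$-component to strong convergence.

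The bulk of the work is the boundedness step, and I would argue by contradiction. If $\|(v_n,w_n)\|\to\infty$ then, since $J(v_n,w_n)=\frac{1}{2}\|v_n^+\|_\V^2-I(v_n,w_n)\to c$ and $I\ge 0$, a bounded $t_n:=\|v_n^+\|_\V$ would force $I(v_n,w_n)$ bounded, and Lemma~\ref{LinkingLemma}(c) would then force $\|(v_n,w_n)\|$ bounded---a contradiction. So $t_n\to\infty$. Normalize $e_n:=v_n^+/t_n$, so $\|e_n\|_\V=1$; by compactness of $\V\hookrightarrow L^q$ for $q<6$, a subsequence satisfies $e_n\weakto e$ in $\V^+$ and $e_n\to e$ in $L^p$ and $L^2$. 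If $e=0$, the Nehari--Pankov maximum property yields $J(v_n,w_n)\ge J(te_n,0)$ for every $t>0$, and (F3) together with $|e_n|_2,|e_n|_p\to 0$ gives $\int F(x,te_n)\,dx\to 0$, so $J(te_n,0)\to t^2/2$ and hence $c\ge t^2/2$ for arbitrary $t$, a contradiction. If $e\ne 0$, the plan is to combine the maximum property $J(v_n,w_n)\ge J(sv_n^++\phi,\psi)$ for suitably chosen $s>0$, $\phi\in\tcV$, $\psi\in\W$ with the super-quadratic lower bound from (F4) and the convexity conditions (F5),(F7) to force the right-hand side to diverge to $+\infty$; the pointwise blow-up $|v_n^+(x)|=t_n|e_n(x)|\to\infty$ on $\{e\ne 0\}$ drives the needed divergence. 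This case $e\ne 0$ is the main technical obstacle: (F4) does not supply an Ambrosetti--Rabinowitz-type quantitative gap, so the argument genuinely requires the full structural package on $F$ together with the variational characterization of $\cN$. Once $t_n$ is bounded, Lemma~\ref{LinkingLemma}(c) forces $\|(v_n,w_n)\|$ bounded.

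With boundedness in hand, reflexivity gives a subsequence with $v_n\weakto v_0$ in $\V$ and $w_n\weakto w_0$ in $\W$; finite-dimensionality of $\tcV$ upgrades $\tv_n\to\tv_0$ to strong convergence, while the compact embedding $\V\hookrightarrow L^p\cap L^2$ yields $v_n\to v_0$ strongly in both spaces. To obtain $v_n^+\to v_0^+$ strongly in $\V^+$, I would test $J'(v_n,w_n)\to 0$ against the bounded sequence $(v_n^+-v_0^+,0)\in\V^+\times\{0\}$:
\begin{equation*}
\int_\Om\langle\curlop v_n^+,\curlop(v_n^+-v_0^+)\rangle\,dx + \la\int_\Om\langle v_n^+,v_n^+-v_0^+\rangle\,dx - \int_\Om\langle f(x,v_n+\nabla w_n),v_n^+-v_0^+\rangle\,dx \to 0.
\end{equation*}
The middle integral vanishes by strong $L^2$-convergence of $v_n^+$, and the last by (F3), H\"older, and strong $L^p$-convergence of $v_n^+$. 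The first integral rearranges as $\|v_n^+\|_\V^2-\langle v_n^+,v_0^+\rangle_\V$ with $\langle v_n^+,v_0^+\rangle_\V\to\|v_0^+\|_\V^2$ by weak convergence, so $\|v_n^+\|_\V\to\|v_0^+\|_\V$. Weak convergence plus norm convergence in the Hilbert space $\V^+$ then yields $v_n^+\to v_0^+$ strongly, completing the $\cT$-convergence $(v_n,w_n)\cTto(v_0,w_0)$.
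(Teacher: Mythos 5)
The skeleton of your argument (contradiction from $\|(v_n,w_n)\|\to\infty$, normalize, split on whether the normalized weak limit vanishes, then conclude norm-convergence of the $\cV^+$-component by testing $J'(v_n,w_n)$ against a bounded sequence) matches the structure of the paper's proof. But the crucial case, where the normalized limit is nonzero, is not proved in your proposal --- you explicitly call it ``the main technical obstacle'' and offer only a plan. That plan also points in the wrong direction. On the Nehari--Pankov manifold the maximizing property gives the \emph{upper} bound $J(v_n,w_n)\ge J(sv_n^++\phi,\psi)$; to derive a contradiction from $J(v_n,w_n)\to c$ you would have to produce a competitor in $\wh{X}(v_n,w_n)$ whose energy diverges to $+\infty$. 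In the case $e=0$ this works because the nonlinear term vanishes along $te_n$, but once $e\ne0$ the term $\int_\Om F(x,te_n)\,dx$ eventually dominates $t^2/2$ and kills the growth; the maximizing property by itself does not force a contradiction there, and invoking (F5), (F7) does not rescue it.

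The paper's argument for this case is of a completely different (and, it turns out, more elementary) nature and does not use the maximizing property at all. It normalizes by the full norm, $\bar v_n:=v_n/\|(v_n,w_n)\|$, $\bar w_n:=w_n/\|(v_n,w_n)\|$; shows via (F4) and the $L^p$-projection estimates $|v|_p\le C_1|v+\nabla w|_p$ that $\|v_n\|_\cV$ is comparable to $\|(v_n,w_n)\|$, so $\|\bar v_n\|_\cV$ is bounded away from $0$; rules out $\bar v_0=0$ as you do for $e$; and then, for $\bar v_0\ne 0$, estimates directly
\[
\frac{J(v_n,w_n)}{\|(v_n,w_n)\|^2}\;\le\;\frac12\|\bar v_n\|^2_\cV+\frac{\la}{2}|\bar v_n+\nabla\bar w_n|^2_2- d\,C_1^{-p}\int_\Om |v_n|^{p-2}|\bar v_n|^2\,dx
\]
and sends the last integral to $+\infty$ by Fatou's lemma, since $v_n(x)\to\infty$ a.e.\ on $\{\bar v_0\ne0\}$. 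This contradicts $J(v_n,w_n)/\|(v_n,w_n)\|^2\to0$. Note that only (F4) and the projection estimate appear --- neither (F5) nor (F7) is needed in this lemma --- and that normalizing only the $\cV^+$-component, as you do, loses the pointwise control of the full field $v_n+\nabla w_n$ that the Fatou step requires. So the proposal has a genuine gap at the heart of the boundedness argument.
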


\begin{proof}
Suppose that $(v_n,w_n)\in\cN$ and
\begin{equation}\label{eqCoercive1}
\|(v_n,w_n)\|\to\infty\qquad\text{as $n\to\infty$.}
\end{equation}
Let $\bar{v}_n(x) := \frac{v_n(x)}{\|(v_n,w_n)\|^2}$ and
$\bar{w}_n(x) := \frac{w_n(x)}{\|(u_n,w_n)\|^2}$ for $x\in\Om$. Observe that $\nabla\W$ is a closed subspace of $L^p(\Om,\R^3)$, and $\cl\V\cap\nabla \W=\{0\}$. Therefore there is a continuous projection of $\cl\V\oplus\nabla\W$ onto $\nabla\W$ and onto $\cl\V$ in $L^p(\Om,\R^3)$. Hence there is a constant $C_1>0$ such that
\begin{equation}\label{ineqPS1}
|\nabla w|_p\leq C_1|v+\nabla w|_{p}
\end{equation}
and
\begin{equation}\label{ineqPS1_u}
|v|_p\leq C_1|v+\nabla w|_{p}.
\end{equation}
for any $v\in\V$ and $w\in\W$. Note that by (F4) we obtain for almost all $n\in\N$
\begin{eqnarray*}
\|v_n\|_\V^2&\geq& 2J(v_n,w_n) - \la(|v_n|_{2}^2+|\nabla w_n|_{2}^2) + 2d|v_n+\nabla w_n|_{p}^p\\
&\geq& c+2d|v_n+\nabla w_n|_{p}^p,
\end{eqnarray*}
and by (\ref{ineqPS1}) we get
\begin{equation*}
2\|v_n\|_\V^2\geq \|v_n\|_\V^2 + c + 2d|v_n+\nabla w_n|_p^p
 \geq \|v_n\|^2_\V + c + 2dC_1|\nabla w_n|_p^p.
\end{equation*}
If $|\nabla w_n|_p\to 0$ then
$$
\|v_n\|_\V^2 + c + 2dC_1|\nabla w_n|_p^p \geq \|(v_n,w_n)\|^2
$$
for almost all $n\in \N$. Otherwise, passing to a subsequence, we have $\liminf_{n\to\infty}|\nabla w_n|_p > 0$ and
$$
\|v_n\|_\V^2 + c + 2dC_1|\nabla w_n|_{p}^p \geq C_2\|(v_n,w_n)\|^2
$$
for some constant $C_2>0$. Therefore we have shown that, up to a subsequence,
$$
2\|v_n\|_\V^2\geq \min\{1,C_2\} \|(v_n,w_n)\|^2
$$
and thus $(\|\bar{v}_n\|_\V)_n$ is bounded away from $0$. Therefore we may assume that
$$
\bar{v}_n = \bar{v}_n^+ + \widetilde{\bar{v}}_n,
$$
where $\bar{v}_n^+ \in \V^+$, $\widetilde{\bar{v}}_n \in \tcV$,
\begin{eqnarray*}
&&\bar{v}_n^+ \to \bar{v}^+_0 \hbox{ in }L^p(\Om,\R^3),\\
&&
\bar{v}_n\weakto \bar{v}_0 = \bar{v}^+_0+\widetilde{\bar{v}}_0 \hbox{ in }\V, \\
&&\bar{v}_n(x)\to \bar{v}_0(x)\hbox{ a.e.\ on }\Om
\end{eqnarray*}
and
$\bar{v}_0 \neq 0$. Indeed, if $\bar{v}_0 = 0$ then $\bar{v}^+_0 = \widetilde{\bar{v}}_0=0$. Observe that by Lemma \ref{LinkingLemma}, Lemma \ref{LemB3check} and Proposition \ref{prop:geometry}, condition (A5) is satisfied. Hence
$$
J(v_n,w_n)
 \geq J(t\bar{v}_n^+,0)
  = \frac{t^2}{2}\|\bar{v}_n^+\|^2_\V - \int_\Om F(x,t\bar{v}_n^+)\,dx
$$
for any $t\geq 0$. Therefore by (\ref{eqest2})
$$
c \geq \frac{t^2}{2}\liminf_{n\to\infty}\|\bar{v}_n^+\|^2_\V
  =\frac{t^2}{2}\liminf_{n\to\infty}\|\bar{v}_n\|^2_\V
$$
for any $t\geq 0$. Now we obtain a contradiction because $(\|\bar{v}_n\|_\V)_n$ is bounded away from $0$. Therefore $\bar{u}_0\neq 0$. Observe that (F4) and \eqref{ineqPS1_u} imply
\begin{eqnarray*}
\frac{J(v_n,w_n)}{\|(v_n,w_n)\|^2}
 &\leq& \frac12\|\bar{v}_n\|^2_\V + \frac\la2|\bar{v}_n + \nabla\bar{w}_n|^2_2
         -d\int_\Om|v_n+\nabla w_n|^p\,dx \\
 &\leq& \frac12\|\bar{v}_n\|^2_\V + \frac\la2|\bar{v}_n+\nabla\bar{w}_n|^2_2
         -dC_1^{-p}\int_\Om|v_n|^p\,dx \\
 &=& \frac12\|\bar{v}_n\|^2_\V + \frac\la2|\bar{v}_n+\nabla\bar{w}_n|^2_2
      -dC_1^{-p}\int_\Om|v_n|^{p-2}|\bar{v}_n|^2\,dx.
\end{eqnarray*}
Since $v_n(x)=\bar{v}_n(x)\|(v_n,w_n)\|^2\to\infty$ if $\bar{v}_0(x)\neq 0$, then by the Fatou's lemma
$$
\int_\Om|v_n|^{p-2}|\bar{v}_n|^2\,dx \to \infty
$$
and we obtain a contradiction with $\frac{J(v_n,w_n)}{\|(v_n,w_n)\|^2}\to 0$ as $n\to\infty$. Therefore $\|(v_n,w_n)\|$ is bounded and we may assume, up to a subsequence,
$$
v_n \weakto v_0\hbox{ in }\V,\;v_n \to v_0 \hbox{ in }L^p(\Om,\R^3),
 \hbox{ and }w_n\weakto w_0\hbox{ in }\W
$$
for some $(v_0,w_0)\in\V\times\W$. Note that
\begin{eqnarray*}
&&J'(v_n,w_n)[v_n-v_0,0]\\
&=&\hspace{1cm}
 \|v_n-v_0\|_\V^2+\int_\Om\langle \nabla v_0,\nabla v_n-\nabla v_0\rangle\,dx
      +\la\int_\Om\langle v_n+\nabla w_n,v_n-v_0\rangle \,dx\\
&&\hspace{2cm}
     -\int_\Om\langle f(x,v_n+\nabla w_n),v_n-v_0\rangle \,dx.
\end{eqnarray*}
Since $(v_n)_{n}$ is bounded in $\V$, $v_n\to v_0$ in $L^2(\Om,\R^3)$ and
$(f(x,v_n+\nabla w_n))_{n}$ is bounded in $L^{\frac{p}{p-1}}(\Om,\R^3)$ we deduce
$\|v_n-v_0\|_\V\to 0$.
\end{proof}

\begin{Prop}\label{PropCritical}
There is a critical point $(v_0,w_0)\in\cN$ of $J$ such that
$$
J(v_0,w_0) = \inf_\cN\J.
$$
\end{Prop}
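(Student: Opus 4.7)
The plan is to apply Theorem~\ref{ThLink1}(b) directly to the functional $J:X\to\R$ with the splitting $X=\V\times\W=X^+\oplus\tX$, where $X^+=\V^+\times\{0\}$ and $\tX=\tcV\times\W$. The space $X$ is reflexive (as $\V$ is a Hilbert space and $\W=W^{1,p}_0(\Om)$ with $p>1$ is reflexive), and $X^+$ inherits a Hilbert structure from the scalar product on $\V$, so the abstract framework of Section~\ref{sec:Nehari} is applicable here.

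First I would assemble the verification of the hypotheses (A1)--(A6). Lemma~\ref{LinkingLemma} supplies (A1), (A2), (A4), and the geometric conditions (B1)--(B2); Lemma~\ref{LemB3check} supplies (B3); and Proposition~\ref{prop:geometry} then yields (A5) and (A6). Once these are in place, the Nehari--Pankov manifold $\cN$ (as defined in \eqref{DefOfNehari}) is well defined, and combining (A4) with (A5) one observes that the minimization level
\[
c_0 := \inf_{\cN} J
\]
satisfies $c_0 \ge a > 0$, where $a$ is the positive infimum from (A4) (obtained because $\wh m(u)$ maximizes $J$ on $\wh X(u)$, so in particular $J(\wh m(u))\ge J(ru^+/\|u^+\|)\ge a$).

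Next I would check the $(PS)^\cT_{c_0}$-condition on $\cN$. Since $c_0>0$, Lemma~\ref{lemmaconv2} applies to any Palais--Smale sequence $(v_n,w_n)\subset\cN$ at level $c_0$ and produces, along a subsequence, $v_n\to v_0$ strongly in $\V$ together with $w_n\weakto w_0$ in $\W$. Writing $v_n=v_n^++\tv_n$, strong convergence in $\V$ implies $v_n^+\to v_0^+$ in $\V^+$ and $\tv_n\to\tv_0$ in $\tcV$, which is stronger than weak convergence; together with $w_n\weakto w_0$ in $\W$ this gives $(v_n,w_n)\cTto(v_0,w_0)$ in $X$, as required.

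With all the hypotheses of Theorem~\ref{ThLink1}(b) now confirmed, the theorem delivers a critical point $(v_0,w_0)\in\cN$ of $J$ with $J(v_0,w_0)=c_0=\inf_\cN J$. The only real obstacle is the bookkeeping required to mesh the two strands of preparation: checking that $c_0>0$ so that Lemma~\ref{lemmaconv2}'s hypothesis is met, and matching the convergence it produces to the precise notion of $\cT$-convergence demanded by the abstract framework. Both of these are short but essential, and every other piece of the argument has already been carried out in the preceding lemmas.
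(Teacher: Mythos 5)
There is a genuine gap. Theorem~\ref{ThLink1} requires hypotheses (A1)--(A6), and you claim to assemble all of these from the preceding lemmas, but your list only covers (A1), (A2), (A4), (A5), (A6): nothing in Lemma~\ref{LinkingLemma}, Lemma~\ref{LemB3check}, or Proposition~\ref{prop:geometry} addresses (A3). Recall that (A3) reads: if $u_n\cTto u$ and $I(u_n)\to I(u)$ then $u_n\to u$. In the present setting this amounts to showing that if $v_n^+\to v_0^+$, $\tv_n\weakto\tv_0$ in $\V$, $w_n\weakto w_0$ in $\W$, and $I(v_n,w_n)\to I(v_0,w_0)$, then in addition $\nabla w_n\to\nabla w_0$ strongly in $L^p(\Om,\R^3)$. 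This is not automatic, and it is in fact where most of the content of the paper's proof of the proposition lives: one first upgrades the weak $L^p$-convergence of $v_n+\nabla w_n$ to a.e.\ convergence (via the $L^2$-orthogonality argument when $\la<0$, or via the uniform strict convexity (F6) when $\la=0$), then uses the Vitali convergence theorem to pass to the limit in $\int_\Om F(x,v_n+\nabla w_n)-F(x,v_n+\nabla w_n-(v_0+\nabla w_0))\,dx$, to deduce from the convergence of $\int_\Om F(x,v_n+\nabla w_n)$ that $\int_\Om F(x,v_n+\nabla w_n-(v_0+\nabla w_0))\,dx\to 0$, and finally invokes (F4) to conclude $|v_n+\nabla w_n-(v_0+\nabla w_0)|_p\to 0$.

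Aside from this omission, your plan follows the paper exactly: apply Theorem~\ref{ThLink1}(b) with the splitting $X^+=\V^+\times\{0\}$, $\tX=\tcV\times\W$, use Lemma~\ref{lemmaconv2} for the $(PS)^\cT$-condition, and observe $c_0\geq a>0$ from (A4)--(A5). But without verifying (A3) you are not entitled to invoke Theorem~\ref{ThLink1}, because its conclusion (via the cited Nehari--Pankov machinery from Szulkin--Weth) depends on (A3), e.g.\ to ensure continuity of the map $\wh m$ and hence that $\cN$ is homeomorphic to $S^+$. You should add the (A3) verification as a separate step in the argument. (As a side remark: the paper has an evident typo at this point, saying ``it remains to show that (A2) holds'' when it plainly means (A3); watch out for that when reconciling with the source.)
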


\begin{proof}
In view of Lemma \ref{LinkingLemma}, Lemma \ref{LemB3check} and Proposition \ref{prop:geometry}, we know that (A1)-(A2) and (A4)-(A6) are satisfied. From Lemma \ref{lemmaconv2} we obtain that $J$ satisfies the $(PS)_c^{\cT}$-condition on $\cN$ for any $c>0$. In order to apply Theorem \ref{ThLink1} it remains to show that (A2) holds. Assume that $v_n^+\to v^+_0$, $v_n'\weakto u_0'$ in $\V$, $w_n\weakto w_0$ in $\W$ and $I(v_n+\nabla w_n)\to I(v_0+\nabla w_0)$. Since along a subsequence $\tv_n\to\tv_0$ in $\V$, then
$$
-\frac\la2\int_\Om|v^+_n+\nabla w_n|^2\,dx + \int_\Om F(x,v_n+\nabla w_n)\; dx
 \to -\frac\la2\int_\Om|v^+_0+\nabla w_0|^2\,dx + \int_\Om F(x,v_0+\nabla w_0)\,dx.
$$
If $\la<0$ then by the weakly sequentially lower semicontinuity
\begin{equation}\label{EqconvinL2}
|v^0_n+\nabla w_n|_2 \to |v^+_0+\nabla w_0|_2
\end{equation}
and since $v^+_n+\nabla w_n \weakto v^+_0+\nabla w_0$ in $L^p(\Om,\R^3)$ then, up to a subsequence, $v^+_n+\nabla w_n \weakto v^+_0+\nabla w_0$ in $L^2(\Om,\R^3)$ and by
(\ref{EqconvinL2}) we have $v_n+\nabla w_n\to v_0+\nabla w_0$ in $L^2(\Om,\R^3)$. Hence
$$
v_n+\nabla w_n\to v_0+\nabla w_0\hbox{ a.e.\ on }\Om.
$$
If $\la=0$ then by (F6) and for any $0 < r \leq R$
\begin{equation}\label{eqLambda0case}
m := \inf_{\genfrac{}{}{0pt}{}{x\in\Om,u_1,u_2\in\R^3}{r\leq|u_1-u_2|,|u_1|,|u_2|\leq R}}\;
       \frac12(F(x,u_1)+F(x,u_2)) - F\left(x,\frac{u_1+u_2}{2}\right) > 0.
\end{equation}
Observe that by the convexity of $F$ in $u$
$$
0
 \leq \limsup_{n\to\infty}\int_{\R^3}\frac12(F(x,v_n+\nabla w_n) + F(x,v_0+\nabla w_0))
        - F\left(x,\frac{v_n+\nabla w_n+v_0+\nabla w_0}{2}\right)\,dx
 \leq 0.
$$
Therefore setting
$$
\Om_n:=\{x\in\Om\;|v_n+\nabla w_n-(v_0+\nabla w_0)|\geq r,\;|v_n+\nabla w_n|\leq R,\;
          |v_0+\nabla w_0|\leq R\}
$$
there holds
$$
\mu(\Om_n)m
 \leq \int_{\R^3}\frac12(F(x,v_n+\nabla w_n) + F(x,v_0+\nabla w_0))
       - F\left(x,\frac{v_n+\nabla w_n+v_0+\nabla w_0}{2}\right)\, dx
$$
and thus $\mu(\Om_n)\to 0$ as $n\to\infty$. Since $0<r\leq R$ are arbitrary chosen, we deduce
$$
v_n+\nabla w_n\to v_0+\nabla w_0\hbox{ a.e.\ on }\Om.
$$
Finally observe that
\begin{eqnarray*}
&&\int_\Om F(x,v_n+\nabla w_n) - F(x,v_n+\nabla w_n-(v_0+\nabla w_0))\,dx\\
&&\hspace{1cm}
   = \int_\Om\int_0^1\frac{d}{dt}F(x,v_n+\nabla w_n+(t-1)(v_0+\nabla w_0))\,dtdx\\
&&\hspace{1cm}
  = \int_0^1\int_\Om\langle f(x,v_n+\nabla w_n+(t-1)(v_0+\nabla w_0)),
          v_0+\nabla w_0\rangle\,dxdt.
\end{eqnarray*}
Since $f(x,v_n+\nabla w_n+(t-1)(v_0+\nabla w_0))\to f(x,t(v_0+\nabla w_0))$ a.e.\ on $\Om$ then in view of the Vitali convergence theorem
\begin{eqnarray*}
&&\int_\Om F(x,v_n+\nabla w_n)-F(x,v_n+\nabla w_n-(v_0+\nabla w_0))\,dx\\
&&\hspace{1cm}
  \to \int_0^1\int_\Om\langle f(x,t(v_0+\nabla w_0)),v_0+\nabla w_0\rangle\,dxdt
  = \int_\Om F(x,v_0+\nabla w_0)\,dx
\end{eqnarray*}
as $n\to\infty$. Moreovoer, since
$\int_\Om F(x,v_n+\nabla w_n)\to \int_\Om F(x,v_0+\nabla w_0)\,dx$
there holds
\begin{equation}\label{EqConvf}
\int_\Om F(x,v_n+\nabla w_n-(v_0+\nabla w_0))\,dx \to 0
\end{equation}
and by (F4)
$$
|v_n+\nabla w_n-(v_0+\nabla w_0)|_{p} \to 0.
$$
Therefore $\nabla w_n\to\nabla w_0$ in $L^p(\Om,\R^3)$.
\end{proof}

\begin{altproof}{Theorem~\ref{thm:main}}
By Proposition \ref{PropCritical} there exists a critical point $(v_0,w_0)$ of $J$ such that $(v_0,w_0)\in\cN$, in particular $v_0\neq0$. Proposition \ref{PropSolutE} yields that
$E = v_0+\nabla w_0$ is a critical point of $\fJ$ on the space $\V\oplus\nabla \W$ and hence a solution of \eqref{eq:problem}. Moreover $E\neq 0$ since $v_0$ and $\nabla w_0$ are orthogonal in $L^2(\Om,\R^3)$ and $E$ is a ground state solution, i.e.\
$$
\fJ(A)=\inf_{(v,w)\in\cN} \fJ(v+\nabla w).
$$
\end{altproof}

\section{Proof of Theorem~\ref{thm:sym1}{}}\label{sec:proof-sym}

The symmetry condition (S) allows to adapt the approach from \cite{BenForAzzAprile} to our setting. Since $\Om$ is invariant under $G = O(2)\times{1}\subset O(3)$ we can define an action of $g \in G$ on $v \in \cV$ and on $w \in \cW$ as follows:
\[
(g*v)(x) := g\cdot v(g^{-1}x) \qquad\text{and}\qquad
(g*w)(x) := w(g^{-1}x).
\]
It is not difficult to check that this defines an isometric linear (left) action of $G$ on $X = \cV\times\cW$. In particular there holds $gv\in\cV$ and
$\int_\Om|\curlop (g*v)|^2dx = \int_\Om|\curlop v|^2dx$. Similarly, $g*w\in\cW$, and
$|\nabla (g*w)|_p = |\nabla w|_p$. Moreover, as a consequence of (S), $J$ is invariant with respect to this action: $J(g*v,g*w) = J(v,w)$. Let $X^G = \cV^G\times\cW^G$ be the fixed point set of this action, so $\cV^G$ consists of all $G$-equivariant vector fields $v\in\cV$, and $\cW^G$ consists of all $G$-invariant functions $w\in\cW$. By the principle of symmetric criticality, a critical point of the constrained functional $J|_{X^G}$ is a critical point of $J$.

Next we decompose any $v \in \cV^G$ as $v = v_\rho+v_\tau+v_\zeta$ with:
\[
v_\rho(x)
 = \be(r,x_3)\begin{pmatrix}x_1\\x_2\\0\end{pmatrix},\quad
v_\tau(x)
 = \al(r,x_3)\begin{pmatrix}-x_2\\x_1\\0\end{pmatrix},\quad
v_\zeta(x)
 = \ga(r,x_3)\begin{pmatrix}0\\0\\1\end{pmatrix},
\]
where $r=\sqrt{x_1^2+x_2^2}$. That the coefficient functions $\al,\be,\ga$ depend only on $(r,x_3)$ is an immediate consequence of the $G$-equivariance of $v$, i.~e.\ $v(gx)=g\cdot v(x)$. As in \cite[Lemma~1]{BenForAzzAprile} one sees that
$\nabla v_\rho,\nabla v_\tau,\nabla v_\zeta\in L^2(\Om;\R^3)$.
Clearly $\div v_\tau = 0$, hence $v_\tau, v_\rho+v_\zeta \in \cV$. Therefore the map
\[
S:\cV^G \to \cV^G, \quad S(v_\rho+v_\tau+v_\zeta) := -v_\rho+v_\tau-v_\zeta
\]
is well defined. A direct computation shows that
\begin{equation}\label{eq:S-isom1}
\langle\curlop v_\tau(x),\curlop v_\rho(x)\rangle = 0 =
 \langle\curlop v_\tau(x),\curlop v_\zeta(x)\rangle
\end{equation}
and
\begin{equation}\label{eq:S-isom2}
\langle\nabla v_\tau(x),\nabla v_\rho(x)\rangle = 0 =
 \langle\nabla v_\tau(x),\nabla v_\zeta(x)\rangle
\end{equation}
\eqref{eq:S-isom1} implies that $S$ is a linear isometry, and so is
\[
T:X^G=\cV^G\times\cW^G\to X^G,\quad T(v,w) := (Sv,-w).
\]
Clearly we have $T^2=\id$, and
\[
(X^G)^T:=\{(v,0)\in\cV^G\times\cW: Sv=v\} = \{(v,0)\in\cV^G\times\cW: v=v_\tau\}.
\]
As a consequence of \eqref{eq:S-isom1}, \eqref{eq:S-isom2}, and hypothesis (S), $J$ is invariant under this action:
$$
J(Tu)=J(u) \quad\text{for all $u=(v,w)\in X^G$.}
$$
Applying the principle of symmetric criticality once more we see that it suffices to find critical points of $J|_{(X^G)^T}$.

The above discussion shows that we only need to find critical points of the functional
\[
J_Y: Y:=\{v\in\cV: Sv=v\}\to\R
\]
defined by
\[
\begin{aligned}
J_Y(v)
 &= J(v,0)
  = \frac12\int_{\Om}|\curlop v|^2\;dx + \frac\la2\int_{\Om} |v|^2\;dx
    - \int_{\Om}F(x,v)\;dx\\
 &= \frac12\|v\|_\cV^2 + \frac\la2\int_{\Om} |v|^2\;dx
    - \int_{\Om}F(x,v)\;dx.
\end{aligned}
\]
Here we can apply standard critical point theory. Since $F$ is even as a consequence of (S), the existence of an unbounded sequence of solutions follows from the fountain theorem in \cite{Bartsch:1993}, see also \cite{Willem}. Details are left to the reader.

{\bf Acknowledgements.} T.~B.\ would like to thank Wolfgang Reichel (Karlsruhe) for proposing the problem and for invaluable help. He would also like to thank Michael Plum (Karlsruhe) and Tobias Weth (Frankfurt) for various discussions on the topic. J.~M. would like to thank the members of the Department of Mathematics at the University of Giessen, where part of this work has been done, for their invitation and hospitality.

{\sc Address of the authors:}\\[1em]
\parbox{8cm}{Thomas Bartsch\\
 Mathematisches Institut\\
 Universit\"at Giessen\\
 Arndtstr.\ 2\\
 35392 Giessen\\
 Germany\\
 Thomas.Bartsch@math.uni-giessen.de}
\parbox{7cm}{Jaros\l aw Mederski\\
 Nicolaus Copernicus University \\
 ul.\ Chopina 12/18\\
 87-100 Toru\'n\\
 Poland\\
 jmederski@mat.umk.pl\\
 }

\end{document}